\DeclareSymbolFont{bbold}{U}{bbold}{m}{n}
\DeclareSymbolFontAlphabet{\mathbbold}{bbold}
\newtheorem{thm}{Theorem}[section]
\newtheorem{prop}[thm]{Proposition}
\newtheorem{lemma}[thm]{Lemma}
\newtheorem{cor}[thm]{Corollary}
\newtheorem*{thm*}{Theorem}
\theoremstyle{definition}
\newtheorem{defn}[thm]{Definition}
\newtheorem{rem}[thm]{Remark}
\newcommand{\Q}{\mathbb Q}
\newcommand{\Z}{\mathbb Z}
\newcommand{\R}{\mathbb R}
\newcommand{\C}{\mathbb C}
\newcommand{\F}{\mathbb F}
\newcommand{\h}{\mathcal H}
\newcommand{\A}{\mathbb A}
\newcommand{\1}{\mathbbold 1}
\newcommand{\fq}{\mathfrak{q}}
\newcommand{\fo}{\mathfrak{o}}
\newcommand{\fC}{\mathfrak{C}}
\newcommand{\fO}{\mathfrak{O}}
\newcommand{\fP}{\mathfrak{P}}
\newcommand{\fp}{\mathfrak{p}}
\newcommand{\fD}{\mathfrak{D}}
\newcommand{\fN}{\mathfrak{N}}
\DeclareMathOperator{\Alb}{Alb}
\DeclareMathOperator{\Aut}{Aut}
\DeclareMathOperator{\GL}{GL}
\DeclareMathOperator{\Lie}{Lie}
\DeclareMathOperator{\Ind}{Ind}
\DeclareMathOperator{\rH}{H}
\title{Arithmetic quotients of the complex ball and a conjecture of Lang}
\author{Mladen Dimitrov \and Dinakar Ramakrishnan}
\address{Universit{\'e}  Lille 1, UMR  8524, UFR Math{\'e}matiques,
59655 Villeneuve d'Ascq Cedex, France}
\email{mladen.dimitrov@gmail.com}
\address{Mathematics 253-37, Caltech,  Pasadena, CA 91125, USA}
\email{dinakar@caltech.edu}
\date{}
\begin{document}

\maketitle

\section*{Introduction}

Let $F$ be a totally real number field  of degree $d$ and ring of integers $\fo$, and let
 $M$ be a  totally imaginary quadratic extension of $F$ with  ring of integers $\fO$.
  Let $G$ be a unitary group over $F$ defined by a hermitian form on $M^{n+1}$  of signature $(n,1)$ at one infinite place $\iota$ and $(n+1,0)$ or $(0,n+1)$ at the others. A  subgroup $\Gamma\subset G(F)$ is arithmetic if it is commensurable with $G(\fo)$ -- the stabilizer in $G(F)$ of  $\fO^{n+1}$ -- and we  will  denote by $Y_\Gamma$ the quotient of the $n$-dimensional complex hyperbolic space by 
    the natural action of  $\iota(\Gamma)\subset G(F_\iota)={\rm U}(n,1)$. If $F\neq \Q$, then the hermitian form is anisotropic and  $Y_\Gamma$ is a projective variety defined  over a number field  (see  Proposition \ref{prop-yau}).

A  projective variety $X$ over $\C$  is said to be Mordellic if it has only a finite number of rational points in every finitely generated field extension of
$\Q$ over which $X$ is defined. Lang conjectured in \cite[Conjecture VIII.1.2]{lang}  that $X$ is  Mordellic if and only if the corresponding analytic space $X(\C)$ is
hyperbolic, meaning that any holomorphic map $\C\to X(\C)$ is constant, which by   Brody \cite{brody} is equivalent to requiring  the Kobayashi semi-distance on $X(\C)$ to be a metric.
It is a consequence of a conjecture of Ullmo (see \cite[Conjecture 2.1]{ullmo}) that a projective variety $X$ defined over a number field $k$ is Mordellic if it
is arithmetically Mordellic, meaning that it has only a finite number of rational points in every finite extension of $k$.

Our first result establishes that many arithmetic compact surfaces  previously only known to be arithmetically Mordellic by   \cite[Th\'eor\`eme 3.2]{ullmo} are in fact Mordellic. To state it precisely we need to 
fix a Hecke character $\lambda$ of $M$ as in  Definition \ref{CM-type}. The existence of such characters is known (see  Lemma \ref{CM-Hecke}).
Denote by $\fC$  the conductor of $\lambda$ and, if the extension $M/F$ is everywhere unramified, we multiply   $\fC$ by any prime $\fq$ of $F$ which does not split in $M$. Moreover, fix an auxiliary prime $\fp$ of $F$ which splits in $M$ and is relatively prime to  $\fC$.
Finally, for every  ideal $\fN\subset \fO$ we consider the standard congruence subgroups $\Gamma_0(\fN)$, $\Gamma_1(\fN)$ and $\Gamma(\fN)$ of $G(F)$ (see Definition  \ref{defn-congruence}).

\begin{thm}\label{picard-compact}
Let $n=2$ and  $G$ over $F$ as above. Then for every choice of $(\fC,\fp)$, and for  any
torsion free subgroup $\Gamma\subset \Gamma_1(\fC)\cap \Gamma_0(\fp)$ of finite index,
$Y_{\Gamma}$ is   Mordellic.
\end{thm}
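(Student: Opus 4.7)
The plan is to realize $Y_\Gamma$ inside an abelian variety via its Albanese morphism and then invoke Faltings' big theorem on rational points of subvarieties of abelian varieties. Fix a base point and let $a\colon Y_\Gamma\to \Alb(Y_\Gamma)$ be the Albanese map with image $Z$. Mordellicity will follow once we establish (i) $a$ is generically finite onto $Z$, and (ii) $Z$ contains no positive-dimensional translate of an abelian subvariety of $\Alb(Y_\Gamma)$. Granting both, Faltings' theorem over a finitely generated field of definition $k$ forces $Z(k)$ to be finite; combined with the generic finiteness of $a$ and the Mordell conjecture (Faltings) applied to the finitely many exceptional fiber curves inside $Y_\Gamma$ — each contained in a ball quotient and hence Brody hyperbolic, so of genus $\geq 2$ — this yields finiteness of $Y_\Gamma(k)$, whence Mordellicity.

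Step (ii) is the clean half. Since $Y_\Gamma$ is a smooth compact quotient of the complex $2$-ball, the Bergman metric has strictly negative holomorphic sectional curvature; by \cite{brody}, $Y_\Gamma$ is Brody hyperbolic, so no non-constant holomorphic map from a complex torus can factor through $a$. Consequently $Z$ contains no translate of a positive-dimensional abelian subvariety.

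Step (i) is where the hypotheses $(\fC,\fp)$ and the specific congruence subgroup $\Gamma\subset\Gamma_1(\fC)\cap\Gamma_0(\fp)$ do the real work, and it constitutes the main obstacle. Following Rogawski's endoscopic classification of discrete automorphic representations of $G$, the Hecke character $\lambda$ of $M$ — paired with suitable auxiliary characters on the endoscopic torus — transfers to cohomological cuspidal representations $\pi$ of $G(\A_F)$ whose archimedean component at $\iota$ is the holomorphic discrete series of Hodge type $(1,0)$, and whose finite part admits nonzero $\Gamma$-fixed vectors precisely because of the calibrated conductor $\fC$ and the auxiliary split prime $\fp$. Varying the auxiliary twist produces a family of holomorphic $1$-forms on $Y_\Gamma$ that separate cotangent directions at a generic point, so the evaluation map
\[
\rH^0(Y_\Gamma,\Omega^1_{Y_\Gamma})\otimes_\C \mathcal O_{Y_\Gamma}\longrightarrow \Omega^1_{Y_\Gamma}
\]
is generically surjective and $a$ is generically finite. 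The delicate point is that mere non-vanishing of $\rH^0(Y_\Gamma,\Omega^1_{Y_\Gamma})$, which automorphic methods provide relatively easily, is not enough: one needs to produce two $\C$-linearly independent $1$-forms that remain independent at a generic geometric point, which is why one must exploit the combined freedom in the choice of $\lambda$, of the twisting character, and of the auxiliary split prime $\fp$ simultaneously.
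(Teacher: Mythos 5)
Your overall scaffolding (Albanese map, Faltings on subvarieties of abelian varieties, hyperbolicity to kill low-genus curves) is the right one, but both of your key steps (i) and (ii) have genuine gaps, and neither is actually needed in the form you state. For step (i): generic finiteness of $a\colon Y_\Gamma\to Z$ is a much stronger assertion than anything the automorphic input delivers. Rogawski's theory, together with the local computations at the split prime $\fp$, controls $\dim_\C \rH^0(Y_\Gamma,\Omega^1_{Y_\Gamma})=q(Y_\Gamma)$, i.e., the number of linearly independent global $1$-forms; it says nothing about whether these forms generate the cotangent space at a generic point, and your sentence ``varying the auxiliary twist produces a family of holomorphic $1$-forms that separate cotangent directions'' is an unproved (and unneeded) claim, not an argument. (A smaller but symptomatic inaccuracy: the archimedean components contributing to $\rH^{1,0}$ here are the \emph{non-tempered} constituents $\pi^\pm$ of the Arthur packets, not holomorphic discrete series.) What the hypotheses on $(\fC,\fp)$ actually buy, via Proposition \ref{positivity} (the key point being $\dim \pi_\fp^{K_0(\fp)}\geq 3$ from \eqref{split}), is the inequality $q(Y_\Gamma)>2=\dim Y_\Gamma$, hence merely that $a$ is \emph{not dominant} onto $\Alb(Y_\Gamma)$. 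That weaker statement suffices: by Corollary \ref{brodyan} (Moriwaki's consequence of Faltings), non-dominance already forces $Y_\Gamma(k)$ to be non--Zariski-dense, because if some translate $Z_i\subset Z$ had $j^{-1}(Z_i)=Y_\Gamma$ then $Z=Z_i$ would be a translate of an abelian subvariety generating $\Alb(Y_\Gamma)$, hence all of it.

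For step (ii): the deduction ``$Y_\Gamma$ is Brody hyperbolic, consequently $Z=a(Y_\Gamma)$ contains no positive-dimensional translate of an abelian subvariety'' is invalid. A non-constant holomorphic map $\C\to B+t\subset Z$ need not lift through $a$; concretely, a curve $W\subset Y_\Gamma$ of geometric genus $\geq 2$ (which hyperbolicity permits) can perfectly well surject onto an elliptic curve inside $\Alb(Y_\Gamma)$, so nothing rules out one-dimensional translates inside $Z$, and hence nothing gives finiteness of $Z(k)$. The fix is the one the paper uses: accept that the rational points may concentrate on finitely many curves $C\subset Y_\Gamma$ (the preimages of the translates $Z_i$), and then handle those curves directly --- if $C(k)$ were infinite, Faltings applied to the Albanese of $C$ forces $C$ to have geometric genus $\leq 1$, whose normalization receives a non-constant map from $\C$, contradicting the hyperbolicity of $Y_\Gamma$ (Lemma \ref{covers}). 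So hyperbolicity enters at the level of curves \emph{inside $Y_\Gamma$}, not at the level of the image $Z$. With these two corrections your outline collapses onto the paper's proof; as written, however, both pivotal claims are unjustified.
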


A consequence of this is that   for any arithmetic subgroup $\Gamma\subset G(F)$ there exists a finite explicit cover of $Y_\Gamma$ which is Mordellic.
Note  also that even though the theorem  only concerns arithmetic subgroups, because $F$ and $M$ can vary, it can be applied to infinitely many pairwise non-commensurable cocompact discrete subgroups in ${\rm U}(2,1)$. In order to apply our method  to the analogous case of a unitary group $G''$ defined by a division algebra of dimension $9$ over $M$ with an involution of the second kind, one would need to find  a (cocompact) arithmetic subgroup  $\Gamma\subset G''(F)$ such that the Albanese of $Y_\Gamma$ is non-zero. This is an open question for any $G''$ since, in contrast to our case, it is known by Rogawski \cite{rogawski1}
that the Albanese of $Y_\Gamma$ is zero for any congruence subgroup $\Gamma\subset G''(F)$.

While  Ullmo's approach uses the Shafarevich conjecture, ours
   is based instead on the Mordell-Lang conjecture proved by Faltings  \cite{faltings}  and on the key Proposition \ref{positivity},   which we hope is  of independent interest.

Consider now the case when the hermitian form is isotropic,  which necessarily implies that $F=\Q$ and $M$ is imaginary quadratic.  Then $Y_\Gamma$ is not compact and, for $\Gamma$ arithmetic, we denote by  $Y_\Gamma^\ast$ the  Baily-Borel compactification which is a normal, projective variety of dimension $n$.
A smooth toroidal compactification $X_\Gamma$ of  $Y_\Gamma$ can be defined over a number field (see \cite{faltings-rigid}), and it is not hyperbolic even if $\Gamma$ is torsion-free;
for example, if $n=2$, then   $X_\Gamma$ is  a union of $Y_\Gamma$ with a finite number of elliptic curves -- one above each cusp of  $Y_\Gamma^\ast$.
However, by a result of Tai and Mumford \cite[\S4]{mumford}, $X_\Gamma$ is  of  general type for $\Gamma$ sufficiently small. The  {\it Bombieri-Lang conjecture} asserts then that the points of $X_\Gamma$ over any finitely generated field extension of $\Q$ over which
  $X_\Gamma$  is defined  are not Zariski dense. 
  We prove this in Proposition \ref{bombieri-lang} which  allows us to solve an alternative of Ullmo and Yafaev \cite{ullmo-yafaev} regarding the Lang locus of $Y_\Gamma^\ast$.

\begin{thm} \label{picard-alternative}
For all $\Gamma \subset G(\Q)$  arithmetic and  sufficiently small,   $Y_\Gamma^\ast$ is  arithmetically Mordellic.
\end{thm}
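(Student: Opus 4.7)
The strategy is to combine Proposition \ref{bombieri-lang}, which furnishes the Bombieri-Lang property for the smooth toroidal compactification $X_\Gamma$, with the alternative of Ullmo-Yafaev \cite{ullmo-yafaev} concerning the Lang locus of $Y_\Gamma^\ast$. The whole proof is a matter of funneling rational points through the right morphism and then applying this dichotomy.

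The first ingredient is the natural contraction morphism $\pi\colon X_\Gamma \to Y_\Gamma^\ast$, defined over a number field. This map is an isomorphism above $Y_\Gamma$ and contracts each connected boundary divisor of $X_\Gamma$, which is an abelian variety (an elliptic curve when $n=2$), to a single cusp of $Y_\Gamma^\ast$. Hence, for any finitely generated extension $k/\Q$ over which the data are defined, $Y_\Gamma^\ast(k)$ is the union of the finite set of $k$-rational cusps and the bijective image of $Y_\Gamma(k) \subset X_\Gamma(k)$ under $\pi$.

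Second, for $\Gamma$ sufficiently small, the result of Tai-Mumford recalled in the introduction ensures that $X_\Gamma$ is of general type, so Proposition \ref{bombieri-lang} applies and gives that $X_\Gamma(k)$ is not Zariski dense in $X_\Gamma$. Since $\pi$ is proper and surjective, the image of the Zariski closure of $X_\Gamma(k)$ is a proper closed subvariety of $Y_\Gamma^\ast$ containing $Y_\Gamma^\ast(k)$. Therefore $Y_\Gamma^\ast(k)$ is itself not Zariski dense in $Y_\Gamma^\ast$.

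We then invoke the Ullmo-Yafaev alternative \cite{ullmo-yafaev}: for $Y_\Gamma^\ast$ and any finitely generated $k$, either $Y_\Gamma^\ast(k)$ is Zariski dense in $Y_\Gamma^\ast$, or $Y_\Gamma^\ast$ is arithmetically Mordellic. The non-density established above excludes the first horn, yielding the theorem. The essential difficulty is entirely concentrated in Proposition \ref{bombieri-lang}, where one must exploit general-type geometry and the structure of the canonical bundle of $X_\Gamma$; granted that, the reduction from $X_\Gamma$ to $Y_\Gamma^\ast$ via the contraction $\pi$ is formal, since $\pi$ is a birational morphism whose only contracted loci are collapsed to the finitely many cusps.
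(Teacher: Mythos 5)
Your overall architecture is the same as the paper's: establish that the rational points of the smooth toroidal compactification $X_\Gamma$ are not Zariski dense, transfer this to $Y_\Gamma^\ast$ (the paper leaves the contraction $X_\Gamma\to Y_\Gamma^\ast$ implicit, you spell it out, which is fine), and then kill the ``dense'' horn of the Ullmo--Yafaev all-or-nothing alternative for the Lang locus of $Y_\Gamma^\ast$. That part of your reduction is correct.

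The gap is in your second step, where you justify the non-density of $X_\Gamma(k)$. You write that Tai--Mumford gives that $X_\Gamma$ is of general type, ``so Proposition \ref{bombieri-lang} applies and gives that $X_\Gamma(k)$ is not Zariski dense.'' This is wrong on two counts. First, general type is not the hypothesis of any unconditional non-density statement in the paper: ``general type $\Rightarrow$ rational points not dense'' is precisely the open Bombieri--Lang conjecture, and the paper never uses general type to get non-density (it uses it only in Theorem \ref{picard-open}, via Nadel, for a different purpose). Second, Proposition \ref{bombieri-lang} does not assert non-density for a given $X_\Gamma$; it only asserts the \emph{existence} of some finite cover with that property, and its proof has nothing to do with the Kodaira dimension --- it runs through the irregularity. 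The correct justification, and the one the paper gives, is: by Corollary \ref{growth-cor} one may take ``sufficiently small'' to mean $\Gamma$ is contained in an explicit $\Gamma_0$ with $q(Y_{\Gamma_0})>n$; since irregularity does not decrease in finite covers, $q(Y_\Gamma)>n$, hence $q(X_\Gamma)>n$ by \eqref{toroidal}, so $X_\Gamma$ does not dominate its Albanese, and Corollary \ref{brodyan} (Faltings via the Albanese map) gives the non-density of $X_\Gamma(k)$. With that substitution your argument closes. A minor further point: the Ullmo--Yafaev alternative is a statement about the Lang locus (aggregated over all number fields), not a per-field dichotomy as you phrase it, but your deduction --- non-density over every $k$ forces the Lang locus to be not everything, hence empty --- is still valid.
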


Keeping the assumption that  $M$ is  imaginary quadratic, say  of fundamental discriminant $-D$, let us now suppose in addition that  $n=2$.  The corresponding locally symmetric spaces $Y_\Gamma$ are  called Picard modular surfaces. We state here our main theorem.

\begin{thm}  \label{picard-open} Let $\fD=
\begin{cases}
3 \fO& \text{, if } D=3,  \\
\sqrt{-D} \fO& \text{, if }  D\neq 3 \text{ is odd, } \\
2\sqrt{-D}\fO & \text{, if }  8\text{ divides }  D,  \\
\sqrt{-D}\fP_2 & \text{, otherwise, where  $\fP_2$ is the prime of $M$ above $2$. }
\end{cases}$
 \begin{enumerate}
\item Let $ \Gamma= \begin{cases} \Gamma_1(\fD)& \text{, if } 
D\notin \{3,4,7,8,11,15,19,20,23,24,31,39,43,47, 67, 71, 163\}, \\
\Gamma(\fD)& \text{, if }  D\in  \{8,15,20,23,24,31,39,47, 71\} , \\
 \Gamma(\fD^2)&  \text{, if }  D\in  \{3,4,7,11,19, 43, 67, 163\}. 
\end{cases} $
Then $Y_{\Gamma}^\ast$ is  Mordellic, while $X_{\Gamma}$ is a minimal surface of general type.

\item Let $N>2$ be a prime inert in $M$ and not equal to $3$ when  $D=4$.
Then  $Y_{\Gamma(N)\cap \Gamma_1(\fD)}^\ast$ is  Mordellic, while
$X_{\Gamma(N)\cap \Gamma_1(\fD)}$ is a minimal surface of general type.
\end{enumerate}
\end{thm}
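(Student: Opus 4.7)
The strategy parallels the compact case of Theorem \ref{picard-compact}. The plan is to combine Theorem \ref{picard-alternative}, which yields arithmetic Mordellicity of $Y_\Gamma^\ast$ once $\Gamma$ is sufficiently small, with Faltings' Mordell--Lang theorem applied to a suitable abelian quotient of $\Alb(X_\Gamma)$, exploiting Proposition \ref{positivity} to rule out the positive-dimensional special loci that could carry Zariski-dense rational points. Since $Y_\Gamma^\ast\setminus Y_\Gamma$ is a finite set of cusp points, upgrading arithmetic Mordellicity to Mordellicity of $Y_\Gamma^\ast$ reduces to finiteness of $Y_\Gamma(k)$ for every finitely generated $k\subset\C$ over which $Y_\Gamma$ is defined.

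The list of discriminants and the corresponding levels are tuned precisely so that three conditions hold simultaneously: $\Gamma$ is torsion-free, $\Gamma$ lies within the range of Theorem \ref{picard-alternative} and Proposition \ref{bombieri-lang}, and $X_\Gamma$ is a \emph{minimal} surface of general type. Neatness of $\Gamma_1(\fD)$ fails only for the sporadic discriminants singled out in the statement: for $D\in\{3,4\}$ the unit group $\fO^\times$ forces a passage to $\Gamma(\fD^2)$, while for the remaining exceptional $D$ obstructions coming from small class groups and elliptic fixed points require $\Gamma(\fD)$. General type is ensured by Mumford--Tai \cite{mumford} once $\Gamma$ is deep enough, and minimality is verified by an intersection-theoretic check on $X_\Gamma$, namely that no boundary elliptic curve has self-intersection $-1$; the definition of $\fD$ is engineered so that these self-intersections are sufficiently negative. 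This case-by-case geometric verification is, in my view, the main technical obstacle of the proof.

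The heart of the argument is then to produce, via a Hecke character $\lambda$ of $M$ of conductor dividing $\fD$ supplied by Lemma \ref{CM-Hecke} (the role of the numerical definition of $\fD$ is to match the allowed conductors), a non-trivial morphism $\alpha\colon X_\Gamma\to A_\lambda$ to a CM abelian variety cut out of the Albanese. Proposition \ref{positivity} then guarantees that no positive-dimensional subvariety of $Y_\Gamma$ is mapped by $\alpha$ onto a translate of an abelian subvariety of $A_\lambda$. Faltings' theorem on Mordell--Lang thus forces the Zariski closure of $Y_\Gamma(k)$ to be zero-dimensional for every finitely generated $k$; combined with the arithmetic Mordellicity provided by Theorem \ref{picard-alternative} applied to the residual zero-dimensional strata, this yields Mordellicity of $Y_\Gamma^\ast$.

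Part (ii) follows along the same lines, since $\Gamma(N)\cap\Gamma_1(\fD)\subset \Gamma_1(\fD)$ automatically inherits neatness and all hypotheses needed to invoke Proposition \ref{positivity} and Theorem \ref{picard-alternative}; the hypothesis that $N>2$ be inert in $M$ (and the exclusion of $N=3$ when $D=4$) is precisely what keeps the boundary elliptic curves of $X_{\Gamma(N)\cap\Gamma_1(\fD)}$ non-exceptional, preserving minimality. The overall Mordell--Lang step is essentially uniform; what really requires work is the geometric bookkeeping, across all listed $(D,\fD)$, that underlies both torsion-freeness and the minimality/general-type assertion.
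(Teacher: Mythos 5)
Your proposal correctly identifies the broad shape of the argument (irregularity $>2$, Faltings/Mordell--Lang via the Albanese, case-by-case geometric input for general type), but the central step is missing. The mechanism you give for passing from ``$X_\Gamma(k)$ not Zariski dense'' to finiteness does not work: you assert that Proposition \ref{positivity} ``guarantees that no positive-dimensional subvariety of $Y_\Gamma$ is mapped onto a translate of an abelian subvariety,'' but nothing of the sort follows from $q>2$, which only yields non-dominance of the Albanese map and hence, via Corollary \ref{brodyan}, non-density of $X_\Gamma(k)$. The residual danger is a geometrically irreducible curve $C\subset X_\Gamma$ with $C(k)$ infinite; Faltings applied to $\Alb(C)$ forces $C$ to have geometric genus $\le 1$, and since $X_\Gamma$ is \emph{not} hyperbolic one cannot dispose of such $C$ as in the compact case. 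The paper's essential extra ingredient, absent from your proposal, is Nadel's theorem: because $\mathcal{K}_{X_\Gamma}$ is big and the normalized Bergman metric has holomorphic sectional curvature $-4/3$, every curve of genus $\le 1$ on $X_\Gamma$ lies in the compactifying divisor, whence $Y_\Gamma^\ast(k)$ is finite. Your appeal to Theorem \ref{picard-alternative} cannot substitute for this: it gives only \emph{arithmetic} Mordellicity, only for unspecified ``sufficiently small'' $\Gamma$, and plays no role in the paper's proof here. Relatedly, your minimality argument is off target: an exceptional curve of the first kind is a \emph{rational} curve of self-intersection $-1$, so checking self-intersections of the boundary \emph{elliptic} curves proves nothing; minimality follows from Nadel again, since $X_\Gamma$ can contain no rational curve at all (such a curve would have to lie in the boundary, which consists of elliptic curves).

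A second, smaller gap: you invoke the wrong positivity input. Proposition \ref{positivity} concerns $\Gamma_1(\fC)\cap\Gamma_0(\fp\fq)$ with an auxiliary split prime $\fp$ and does not apply to the levels $\Gamma_1(\fD)$, $\Gamma(\fD)$, $\Gamma(\fD^2)$, $\Gamma(N)\cap\Gamma_1(\fD)$ of the statement. What is needed is Proposition \ref{low}, whose proof is a separate and delicate case analysis (canonical and ``simplest'' Hecke characters of conductor exactly $\fD$ after Rohrlich and Yang, class-number arguments, Lemma \ref{inert} at the inert prime $N$, and neatness via Lemmas \ref{neat} and \ref{neatbis}); this, together with the Holzapfel--Feustel--D\v{z}ambi\'c general-type results, is precisely what dictates the lists of discriminants, not the self-intersection bookkeeping you describe.
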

The  cases $D\in  \{15,20,23,24,31,39,47,71\}$ in (i) of  Theorem \ref{picard-open}
depend on a preprint of D\v{z}ambi\'c \cite{dzambic} which is
being considered for publication elsewhere (see the proof for details).

At the heart of our proof stand some arithmetical computations  using  certain key theorems of Rogawski \cite{rogawski1,rogawski2}.
They yield, for each imaginary quadratic field $M$, an explicit congruence subgroup  $\Gamma$ such that the smooth compactification $X_\Gamma$ does not admit a dominant map to its Albanese variety.
A  geometric ingredient of the proof is a result of Holzapfel {\it et al}  that  $X_{\Gamma}$ is of general type, though not hyperbolic, implying by a theorem of Nadel \cite{nadel}  that any curve of genus $\leq 1$ on it is contained in the compactifying  divisor.

If there is anything new in our approach, it lies in the systematic use of the modern theory of automorphic representations in Diophantine geometry.

\small
\subsection*{Acknowledgments}

We would like to thank Don Blasius, Jean-Fran\c{c}ois Dat,
Najmuddin Fakhruddin, Dick Gross, Barry Mazur,  Matthew Stover and  Shing-Tung Yau for helpful conversations. In fact it was Fakhruddin who suggested our use of the  Mordell-Lang 
conjecture for abelian varieties. Needless to say, this Note owes much to the deep results of Faltings. In addition, we thank the referee and 
Blasius for   their corrections and suggestions which led to an improvement of  the presentation. 
Thanks are also due to Serge Lang (posthumously), and to John Tate, for getting one of us interested in the conjectural Mordellic property of hyperbolic varieties. Finally, we are also happy to acknowledge partial support  the following sources:
the Agence Nationale de la Recherche grants ANR-10-BLAN-0114 and ANR-11-LABX-0007-01 for the first author (M.D.), and from the NSF grant DMS-1001916 for the second author (D.R.).
\normalsize

\section{Basics: lattices, general type and neatness} \label{general}

For any integer $n>1$,  let $\h^n_\C$ be the $n$-dimensional complex hyperbolic space, represented by the unit ball in $\C^n$ equipped with the Bergman metric of constant holomorphic sectional curvature $-4/(n+1)$, on which the real Lie group ${\rm U}(n,1)$ acts in a natural way.

 Given a  lattice $\Gamma\subset{\rm U}(n,1)$ we denote by
 $\bar\Gamma=\Gamma/\Gamma\cap {\rm U}(1)$ its image in the adjoint group ${\rm PU}(n,1)={\rm U}(n,1)/{\rm U}(1)$, where ${\rm U}(1)$ is centrally embedded in ${\rm U}(n,1)$. 
 Conversely any  lattice $\bar\Gamma \subset{\rm PU}(n,1)={\rm PSU}(n,1)$ is the image of a
 lattice in ${\rm U}(n,1)$, namely  the lattice ${\rm U}(1) \bar\Gamma \cap {\rm SU}(n,1)$. 
 We consider the quotient $Y_\Gamma=Y_{\bar\Gamma}=  \bar\Gamma\backslash \h^n_\C$.

\begin{lemma}\label{covers} Let  $\Gamma$ be a lattice in  ${\rm U}(n,1)$.
\begin{enumerate}
\item The analytic variety $Y_\Gamma$ is an orbifold and  one has the following implications:
$$ \Gamma \text{  neat} \Rightarrow \Gamma\text{ torsion-free} \Rightarrow
\bar\Gamma\text{  torsion-free} \Rightarrow Y_\Gamma \text{ is a  hyperbolic manifold.}$$

\item Assume that $\bar\Gamma$ is torsion-free. Then the natural projection $\h^n_\C \to Y_\Gamma$ is an etale covering with deck transformation group $\bar\Gamma$. \end{enumerate}
\end{lemma}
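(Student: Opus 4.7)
The plan is to assemble standard facts about the action of a lattice on the complex ball; there is no single hard step, but several small things must be lined up. I would organize everything around the point-stabilizers of $\bar\Gamma$ acting on $\h^n_\C$.

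First I would note that since $\bar\Gamma$ is discrete in ${\rm PU}(n,1)$ and the stabilizer in ${\rm PU}(n,1)$ of a point of $\h^n_\C$ is compact, the action of $\bar\Gamma$ on $\h^n_\C$ is properly discontinuous with finite point stabilizers. This gives $Y_\Gamma$ the structure of a complex orbifold and reduces both (i) and (ii) to determining when the point stabilizers are trivial.

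Next I would run through the chain of implications in (i). Neatness, namely that the subgroup of $\C^\times$ generated by the eigenvalues is torsion-free, rules out nontrivial elements of finite order, so neat $\Rightarrow$ torsion-free is immediate. For torsion-free $\Gamma\Rightarrow$ torsion-free $\bar\Gamma$, I would use that the kernel $\Gamma\cap{\rm U}(1)$ is a discrete subgroup of the compact group ${\rm U}(1)$, hence finite; any $\gamma\in\Gamma$ mapping to a torsion element of $\bar\Gamma$ then has a power in this finite kernel and is itself of finite order, so must be trivial when $\Gamma$ is torsion-free. Finally, any element of ${\rm PU}(n,1)$ fixing a point of $\h^n_\C$ lies in a compact subgroup, so once $\bar\Gamma$ is torsion-free its finite stabilizers are trivial and its action on $\h^n_\C$ is free; the quotient $Y_\Gamma$ is then a complex manifold. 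Hyperbolicity follows because $\h^n_\C$ is a bounded domain in $\C^n$, hence Kobayashi hyperbolic, and any holomorphic map $\C\to Y_\Gamma$ lifts through the simply connected cover to $\C\to \h^n_\C$ and is therefore constant.

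Part (ii) is then automatic: a free, properly discontinuous action of a discrete group by biholomorphisms on $\h^n_\C$ makes the quotient map a holomorphic Galois covering, étale because $\bar\Gamma$ acts by local biholomorphisms, and with deck transformation group $\bar\Gamma$ by construction. There is no genuine obstacle in the lemma; the only mildly subtle point is the reduction from $\Gamma$ to $\bar\Gamma$ in the second implication of (i), where finiteness of $\Gamma\cap{\rm U}(1)$ has to be invoked.
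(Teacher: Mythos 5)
Your proposal is correct and follows essentially the same route as the paper: compact point stabilizers give the orbifold structure, finiteness of $\Gamma\cap{\rm U}(1)$ handles the passage from $\Gamma$ to $\bar\Gamma$, freeness of the $\bar\Gamma$-action makes $\h^n_\C\to Y_\Gamma$ the universal (hence \'etale Galois) cover, and entire curves are killed by lifting to the ball. The only cosmetic difference is that you invoke boundedness (Liouville/Kobayashi hyperbolicity) of $\h^n_\C$ where the paper cites its negative curvature; both yield the same constancy statement.
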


\begin{proof}
The stabilizer in ${\rm U}(n,1)$ of any point of $\h^n_\C$ is a compact group, hence its intersection with the discrete subgroup $\Gamma$ is finite, showing that $Y_\Gamma$ is an orbifold.

Recall that $\Gamma$ is neat if the subgroup of $\C^\times $ generated by the eigenvalues of any $\gamma\in \Gamma$ is torsion-free.
 In particular $\Gamma$ is torsion-free. Since $\Gamma\cap {\rm U}(1)$ is finite, this implies that $\bar\Gamma$ is torsion-free too.
Under the latter assumption, $\Gamma\cap {\rm U}(1)$ acts trivially on $\h^n_\C$, and  $\bar\Gamma$ acts freely and properly discontinuously on it, hence $Y_\Gamma$ is a manifold. Since $\h^n_\C$  is simply connected, it is  a universal covering space of $Y_\Gamma$ with group $\bar\Gamma$. Hence any holomorphic map from $\C$ to $Y_\Gamma$ lifts to a holomorphic map from $\C$ to $\h^n_\C$ which must be constant because $\h^n_\C$ has negative curvature. Thus $Y_\Gamma$ is hyperbolic.
\end{proof}

Deligne's classification \cite{deligne} of Shimura varieties  implies, when  $\Gamma$ is a congruence subgroup, that  $Y_\Gamma$ admits an  embedding in a Shimura variety.  Hence, by
Shimura's theory of canonical models, $Y_\Gamma$ can be defined over a finite abelian extension of the reflex field $M$.

We claim that this is also true for $\Gamma$ arithmetic, when sufficiently small. Indeed any such $Y_\Gamma$ is a finite unramified  cover of a congruence quotient $Y_{\Gamma'}$ which we have seen is  defined over a number field. 
By Grothendieck, the finite index subgroup $\bar\Gamma$ of the topological fundamental group $\bar\Gamma'$ of 
$Y_{\Gamma'}(\C)$ gives rise to a finite index subgroup of the algebraic fundamental group of  $Y_{\Gamma'}$, yielding
a finite algebraic (etale) map from a model of $Y_\Gamma$ to $Y_{\Gamma'}$. 

In the cocompact case, this remains true even when $\Gamma$ is not arithmetic.

\begin{prop}  \label{prop-yau}
Assume that $\bar\Gamma$ is cocompact and torsion-free. Then the projective variety  $Y_{\bar\Gamma}$ is of general type and  can be defined over a number field.
\end{prop}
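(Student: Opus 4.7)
The plan is to deduce projectivity and general type from the negative curvature of the Bergman metric, and then descend to a number field via a Galois-orbit argument based on Yau's uniformization theorem.

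First, by Lemma \ref{covers}, $\bar\Gamma$ acts freely and properly discontinuously on $\h^n_\C$, so the $\mathrm{PU}(n,1)$-invariant Bergman metric descends to a K\"ahler--Einstein metric on $Y_{\bar\Gamma}$ with negative Ricci form. Hence $c_1(K_{Y_{\bar\Gamma}})$ is represented by a positive $(1,1)$-form, $K_{Y_{\bar\Gamma}}$ is ample, and Kodaira embedding shows that $Y_{\bar\Gamma}$ is projective; being canonically polarized it is automatically of general type.

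For the descent, let $\sigma\in\Aut(\C)$. Since ampleness, smoothness, and Chern numbers are preserved by $\sigma$, the conjugate $Y_{\bar\Gamma}^\sigma$ is a smooth projective variety with ample $K$ whose Chern ratios still attain equality in the Miyaoka--Yau inequality (the ratio $c_1^n/(c_1^{n-2}\cdot c_2)$ being an algebraic invariant, equal to $2(n+1)/n$ for any ball quotient). By Yau's converse to Miyaoka--Yau, $Y_{\bar\Gamma}^\sigma$ is therefore itself a compact quotient of $\h^n_\C$ by a torsion-free cocompact lattice in $\mathrm{PU}(n,1)$. Since $n>1$, Mostow rigidity forces such a lattice to be determined up to conjugation by its abstract isomorphism type, and only countably many finitely presented groups exist up to isomorphism. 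Hence the Galois orbit of $Y_{\bar\Gamma}$, up to biholomorphism, is at most countable.

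Finally, I would fit $[Y_{\bar\Gamma}]$ into the quasi-projective $\Q$-scheme $\mathcal{M}$ of canonically polarized smooth projective varieties with its Hilbert polynomial, whose existence is provided by Viehweg's theory. A $\C$-point of any $\Q$-scheme of finite type whose $\Aut(\C/\Q)$-orbit is countable must be an $\overline{\Q}$-point, so $[Y_{\bar\Gamma}]\in\mathcal{M}(\overline{\Q})$; using that $\Aut(Y_{\bar\Gamma})$ is finite (the manifold being Kobayashi hyperbolic) one then passes from the coarse-moduli statement to an actual $\overline{\Q}$-form of $Y_{\bar\Gamma}$. The main obstacle I foresee is precisely this last step: translating the fact that $[Y_{\bar\Gamma}]$ is a $\overline{\Q}$-point of a coarse moduli space into the descent of the variety $Y_{\bar\Gamma}$ itself, which requires handling the automorphism group carefully via a Weil descent datum.
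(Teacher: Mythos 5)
Your argument is correct in substance, but it follows a genuinely different route from the paper's primary proof. The first half (the Bergman metric descends to a negatively curved K\"ahler--Einstein metric, so $K_{Y_{\bar\Gamma}}$ is ample, giving projectivity and general type) coincides with the paper. For the field of definition, however, the paper's main argument is simply local rigidity of compact ball quotients (Calabi--Vesentini \cite{calabi-vesentini}) combined with Shimura's theorem that a locally rigid projective variety is defined over a number field \cite{shimura-rigid}. The paper does sketch a second proof in the spirit of yours, but only for $n=2$ and $\Gamma$ arithmetic: it uses Yau's $c_1^2=3c_2$ characterization to see that every $\Aut(\C)$-conjugate is again a ball quotient, and then obtains countability of the orbit by transporting Hecke correspondences to $Y_{\bar\Gamma}^\sigma$, invoking Margulis to conclude $\bar\Gamma^\sigma$ is arithmetic, and counting arithmetic lattices. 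Your replacement of that step --- Mostow rigidity plus the countability of finitely presented groups up to isomorphism --- is cleaner and more general: it needs no arithmeticity, works for every $n>1$, and the only loss is the holomorphic/anti-holomorphic ambiguity in Mostow's theorem (conjugacy in the full isometry group rather than in ${\rm PU}(n,1)$), which is harmless for a countability statement. The one place you work harder than necessary is the final descent: once the $\Aut(\C)$-orbit of $Y_{\bar\Gamma}$ up to isomorphism is known to be countable, \cite[Corollary 2.13]{gonzalez} yields directly that the variety is defined over a number field, so the detour through Viehweg's moduli space --- and the coarse-to-fine issue you rightly flag, which must indeed be resolved via finiteness of $\Aut(Y_{\bar\Gamma})$ and a Weil descent argument --- can be avoided entirely.
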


\begin{proof}
The existence of the positive Bergman metric on $\h^n_\C$  implies  by the Kodaira embedding theorem that any quotient by a free action such as $Y_{\bar\Gamma}$ has ample canonical bundle, which results in $Y_{\bar\Gamma}$ being of general type; it even implies that any subvariety of  $Y_{\bar\Gamma}$ is of general type. For surfaces one may alternately use the hyperbolicity of  $Y_{\bar\Gamma}$ to rule out all the cases in the Enriques-Kodaira classification  where the  Kodaira dimension is less than $2$, thus showing that $Y_{\bar\Gamma}$ is of general type.

Calabi and Vesentini \cite{calabi-vesentini} have proved  that $Y_{\bar\Gamma}$ is locally rigid, hence by  Shimura \cite{shimura-rigid} it can be defined over a number field.

In order to highlight the importance of rigidity of compact ball quotients, we   provide  a short second proof when $n=2$ and $\Gamma$ is arithmetic, based on Yau's algebro-geometric characterization of compact K\"ahler surfaces covered by $\h^2_\C$.
Since $Y_{\bar\Gamma}$ has an ample canonical bundle it can be embedded in some projective space, hence is algebraic over $\C$ by Chow. Since $Y_{\bar\Gamma}$  is uniformized by $\h^2_\C$, the
Chern numbers $c_1, c_2$  of its complex tangent bundle
satisfy the relation $c_1^2=3c_2$. Since everything can be defined
algebraically,  for any automorphism $\sigma$ of $\C$, the variety $Y_{\bar\Gamma}^\sigma$ also has ample canonical bundle and  $c_1^{\sigma 2}=3c_2^\sigma$.   By a famous result of Yau  \cite[Theorem 4]{yau}, this is equivalent to the fact that
$Y_{\bar\Gamma}^\sigma$ may be realized as $\bar\Gamma^\sigma \backslash \h^2_\C$ for some cocompact  torsion-free lattice  $\bar\Gamma^\sigma$.

Since $\bar\Gamma$ is arithmetic, it has infinite index in its commensurator  in ${\rm PU}(2,1)$, denoted
by ${\rm Comm}(\bar\Gamma)$.  For every element $g\in {\rm Comm}(\bar\Gamma)$ there is a  Hecke correspondence
\begin{equation}\label{corr}
Y_{\bar\Gamma} \leftarrow Y_{\bar\Gamma\cap g^{-1}\bar\Gamma g}\overset{\sim}{\underset{g\cdot}{\longrightarrow}} Y_{g\bar\Gamma g^{-1} \cap \bar\Gamma }\to Y_{\bar\Gamma}
\end{equation}
and the correspondences for $g$ and $g'$ differ by an isomorphism
$Y_{g\bar\Gamma g^{-1} \cap \bar\Gamma }\overset{\sim}{\longrightarrow}
Y_{g'\bar\Gamma g'^{-1} \cap \bar\Gamma }$ over $Y_{\bar\Gamma}$ if and only if $g'\in \bar\Gamma g$.
By Chow \eqref{corr} is defined algebraically, hence yields a correspondence on $Y_{\bar\Gamma}^\sigma=Y_{\bar\Gamma^\sigma}$:
$$Y_{\bar\Gamma^\sigma} \leftarrow Y_{\bar\Gamma_1}\overset{\sim}{\longrightarrow} Y_{\bar\Gamma_2}\to Y_{\bar\Gamma^\sigma},$$
for some finite index subgroups  $\bar\Gamma_1$ and $\bar\Gamma_2$ of $\bar\Gamma^\sigma$.
By the universal property of the covering space $\h^2_\C$, the middle isomorphism is given by an element of $ g_\sigma \in  {\rm PU}(2,1)\simeq \Aut(\h^2_\C)$. Since $\Aut(\h^2_\C/Y_{\bar\Gamma_i})=\bar \Gamma_i$ ($i=1,2$),  it easily follows that $\bar\Gamma_2 = g_\sigma \bar\Gamma_1g_\sigma^{-1} $, and
by applying  $\sigma^{-1}$ one  sees that $\bar\Gamma_1=\bar\Gamma^\sigma\cap g_\sigma^{-1}\bar\Gamma^\sigma g_\sigma$. It  follows that $g_\sigma\in  {\rm Comm}_G(\Gamma^\sigma)$ and one can check that $g'_\sigma\in \bar\Gamma^\sigma g_\sigma$  if and only if  $g'\in \bar\Gamma g$.
Therefore
${\rm Comm}(\bar\Gamma^\sigma)/\bar\Gamma^\sigma\simeq {\rm Comm}(\bar\Gamma)/\bar\Gamma$
 is  infinite too, which by a major theorem of Margulis implies that $\bar\Gamma^\sigma$ is arithmetic,  providing an alternative proof of  a result of Kazhdan.

Thus $\Aut(\C)$ acts on the set of isomorphism classes of cocompact arithmetic quotients $Y_{\bar\Gamma}$, or equivalently, on the set of equivalence classes of cocompact arithmetic subgroups $\bar\Gamma$ (up to conjugation by an element of ${\rm PU}(2,1)$). The latter set is countable for the following reason. The group ${\rm U}(2,1)$  has  only countably many $\Q$-forms, classified by central simple algebras of dimension $9$ over  $M$, endowed with an involution of a second kind and verifying some conditions at infinity (see \cite[pp.~87-88]{platonov-rapinchuk}). Moreover,  there are only
countably many arithmetic subgroups for a given $\Q$-form, since those are all finitely generated and contained in their common commensurator, which is countable. 

Finally, by \cite[Corollary 2.13]{gonzalez}, the fact that $Y_{\bar\Gamma}$ has a countable orbit under the action of 
 $\Aut(\C)$ is equivalent to $Y_{\bar\Gamma}$ being defined over a number field.
\end{proof}

It is a well known fact that any  orbifold admits a finite cover which is a manifold. 
In view of Lemma \ref{covers}, the two lemmas below   provide such  covers  explicitly  for arithmetic quotients.

\begin{defn}  \label{defn-congruence}
For every  ideal $\fN\subset \fO$ we define  the  congruence subgroup $\Gamma(\fN)$ (resp.
$\Gamma_0(\fN)$, resp. $\Gamma_1(\fN)$) as the kernel (resp. the inverse image of upper triangular, resp. upper unipotent, matrices) of the composite homomorphism:
$$G(\fo) \hookrightarrow {\rm GL}(n+1,\fO) \to{\rm GL}(n+1,\fO/\fN).$$
\end{defn}

The following lemma is well-known (see \cite[Lemma 4.3]{holzapfel}).

\begin{lemma} \label{neat}
For any integer $N>2$ the group  $\Gamma(N)$ is neat.
\end{lemma}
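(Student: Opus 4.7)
The plan is to verify the eigenvalue condition defining neatness directly. Given $\gamma \in \Gamma(N)$ I would write $\gamma = I + NB$ with $B \in M_{n+1}(\fO)$, and note that each eigenvalue of $\gamma$ has the form $1 + N\mu$ for an eigenvalue $\mu$ of $B$; since $B$ is an integral matrix, $\mu$ is an algebraic integer, so the eigenvalues of $\gamma$ lie in $1 + N\overline{\Z}$, with $\overline{\Z}$ the ring of algebraic integers. Because $\gamma^{-1}$ is likewise in $G(\fo) \subset \GL_{n+1}(\fO)$, these eigenvalues are actually units in $\overline{\Z}^\times$.

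Next I would check that the full subgroup $E \subset \overline{\Q}^\times$ generated by the eigenvalues remains congruent to $1$ modulo $N\overline{\Z}$. This is immediate from the identities $\alpha\beta - 1 = \alpha(\beta - 1) + (\alpha - 1)$ and $\alpha^{-1} - 1 = -\alpha^{-1}(\alpha - 1)$, applied to units $\alpha, \beta \equiv 1 \pmod{N\overline{\Z}}$. It therefore suffices to prove that no nontrivial root of unity $\zeta$ lies in $1 + N\overline{\Z}$ when $N > 2$.

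The heart of the proof is then a norm computation. If $\zeta$ is a primitive $m$-th root of unity, $m \geq 2$, and $\zeta - 1 = N\alpha$ with $\alpha \in \Q(\zeta) \cap \overline{\Z} = \Z[\zeta]$, taking $N_{\Q(\zeta)/\Q}$ yields
\[
\pm\Phi_m(1) = N_{\Q(\zeta)/\Q}(\zeta - 1) = N^{\phi(m)}\, N_{\Q(\zeta)/\Q}(\alpha),
\]
so $N^{\phi(m)}$ divides $\Phi_m(1)$ in $\Z$. Using the classical identity $\Phi_m(1) = p$ when $m = p^k$ is a prime power and $\Phi_m(1) = 1$ otherwise, every case fails when $N > 2$: the composite case forces $N \mid 1$; a prime power $p^k \neq 2$ forces $N^{\phi(p^k)} \mid p$ with $\phi(p^k) \geq 2$; and the remaining case $\zeta = -1$ forces $N \mid 2$. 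This last borderline is what makes the bound $N > 2$ sharp, and is the only delicate point in the argument.
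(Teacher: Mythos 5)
Your argument is correct and complete. The paper offers no proof of this lemma, citing only Holzapfel, so there is nothing internal to compare against; what you have written is the classical Minkowski--Serre argument (all eigenvalues lie in the units of $\overline{\Z}$ congruent to $1$ modulo $N\overline{\Z}$, this congruence is stable under products and inverses so it applies to the whole subgroup they generate as the paper's definition of neatness requires, and the norm identity $N_{\Q(\zeta)/\Q}(\zeta-1)=\pm\Phi_m(1)$ rules out nontrivial roots of unity for $N>2$), which is precisely the standard proof of this well-known fact, including the correct identification of $\zeta=-1$ as the case that makes $N>2$ sharp.
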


\begin{lemma} \label{neatbis}
 Suppose that $n=2$ and that $M$ is an imaginary quadratic field of fundamental discriminant $-D\notin\{ -3,-4,-7,-8,-24\}$.
Then  $\Gamma_1(\sqrt{-D}\fO)$ is neat.
\end{lemma}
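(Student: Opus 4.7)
The plan is to analyze the eigenvalues of an arbitrary $\gamma \in \Gamma_1(\sqrt{-D}\fO)$ directly. Let $\lambda_1, \lambda_2, \lambda_3 \in \bar\Q$ be its eigenvalues, $K = M(\lambda_1, \lambda_2, \lambda_3)$ the splitting field of the characteristic polynomial $P(X) \in \fO[X]$ (so $[K:\Q] \leq 12$), and $H = \langle \lambda_1, \lambda_2, \lambda_3\rangle \subset \bar\Q^\times$ the subgroup they generate. The aim is to show $H$ contains no nontrivial root of unity.

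Since $\gamma$ reduces to an upper unipotent matrix modulo $\sqrt{-D}\fO$, its characteristic polynomial satisfies $P(X) \equiv (X-1)^3 \pmod{\sqrt{-D}\fO[X]}$. For every prime $\fP$ of $\fO_K$ above $\sqrt{-D}\fO$ --- equivalently, a prime lying above a ramified prime $\fp_p$ of $M$ for some $p \mid D$ --- this congruence forces $\lambda_i \equiv 1 \pmod{\fP}$ for each $i$. Combined with $\det\gamma \in \fO^\times$, each $\lambda_i$ is in fact a unit of $\fO_K$, and so $H$ is contained in the group of $\fP$-adic principal units at every such prime.

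Suppose for contradiction that $\zeta \in H$ is a root of unity of order $N > 1$. Then $\zeta \equiv 1 \pmod{\fP}$ at every $\fP$ above $\sqrt{-D}\fO$, and by classical reduction theory --- the kernel of $\mu_N \to (\fO_K/\fP)^\times$ equals the $p$-primary part of $\mu_N$, where $p$ is the residue characteristic of $\fP$ --- $N$ must be a power of $p$ for every $p \mid D$. When $D$ has two or more distinct prime factors, $N$ would then simultaneously be a power of two coprime primes, giving $N = 1$, a contradiction that settles the bulk of the cases.

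The main obstacle is the prime-power case $D = p^k$, for which the above cannot exclude a nontrivial $p$-power order. I refine the bound by writing $P(X) = (X-1)^3 + \sqrt{-D}\,Q(X)$ with $Q \in \fO[X]$ of degree at most $2$; substituting $X = \lambda_i$ yields the estimate $v_\fP(\lambda_i - 1) \geq v_\fP(\sqrt{-D})/3$, and this bound transfers to $\zeta$ since the $\fP$-adic principal units of a fixed valuation form a multiplicative subgroup. Comparing with the cyclotomic identity $v_\fP(\zeta_{p^b}-1) = v_\fP(p)/((p-1)p^{b-1})$ and using $e(\fp_p/p) = 2$ produces the numerical inequality $(p-1)p^{b-1}\cdot k \leq 6$. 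Combined with the degree constraint $\Q(\zeta_{p^b}) \subset K$, forcing $\phi(p^b)$ to divide $[K:\Q] \leq 12$, this rules out every prime-power $D$ outside $\{3,4,7,8\}$. The remaining composite exception $D = 24$ is treated by running the same valuation analysis simultaneously at both ramified primes $\fp_2$ and $\fp_3$ dividing $\sqrt{-24}\fO$, where the sharper valuation $v_{\fp_2}(\sqrt{-24}) = 3$ makes the $2$-adic constraint borderline.
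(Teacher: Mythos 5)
Your argument is correct, and it reaches the stated conclusion (in fact a slightly stronger one) by a route that differs from the paper's in two genuine ways. The paper first needs a reduction showing that some power of $\gamma$ actually has a non-trivial root of unity as an \emph{eigenvalue}; this is done by a case analysis on the type of $\gamma$ as an isometry of $\h^2_\C$ (elliptic elements are of finite order; otherwise $\gamma$ fixes a boundary point and is triangularizable with a root-of-unity middle eigenvalue). You bypass this entirely by observing that the whole group $H$ lies in the principal units at every prime above $\sqrt{-D}\fO$, so any torsion element of $H$ --- eigenvalue or not --- is constrained; this is cleaner and is the right level of generality for the definition of neatness. Second, for the endgame the paper uses the Cayley--Hamilton bound $[M(\zeta):M]\leq 3$ to reduce to $\zeta$ of order $2$ or $3$ (after excluding $D=7$) and then the divisibility $p\mid N(\zeta-1)$, concluding only that $D\in\{3,4,8,24\}$; your local analysis at \emph{each} ramified prime separately shows $N$ is simultaneously a $p$-power for every $p\mid D$, which kills all $D$ with two distinct prime factors outright (including $D=24$), and your valuation inequality $\phi(p^b)\,v_{\fp_p}(\sqrt{-D})\leq 6$ handles the prime-power discriminants $D=p\geq 11$, where the degree bound $\phi(p^b)\mid 12$ alone would not suffice (e.g.\ $p=11$). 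One small criticism: your closing sentence about $D=24$ is confused --- your own two-prime argument already shows $N$ must be both a $2$-power and a $3$-power, hence trivial, so there is nothing "borderline" to treat there; the paper excludes $D=24$ only because its weaker argument cannot rule it out, whereas yours shows the lemma actually holds for $D=24$ as well. This does not affect the validity of your proof of the lemma as stated.
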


\begin{proof}
Suppose that the subgroup of $\C^\times$ generated by the eigenvalues of some $\gamma\in \Gamma_1(\sqrt{-D}\fO)$ contains a non-trivial root of unity. Note first that 
$\det(\gamma)\in \fO^\times \cap (1+\sqrt{-D}\fO)=\{1\}$. 

If $\gamma$ is elliptic then it is necessarily of finite order. Otherwise
$\gamma$ fixes a boundary point of $\h^2_\C\subset \mathbb{P}^2(\C)$ and is therefore conjugated in $\GL(3,\C)$ to a matrix of the form $\left(\begin{smallmatrix} \bar\alpha & * & * \\ 0 & \beta & * \\ 0 & 0 & \alpha^{-1} \end{smallmatrix}\right)$, where
$\beta$ is necessarily a root of unity. If $\beta=1$, then $\det(\gamma)=1$ implies that $\alpha\in \R$, 
 leading to  $\alpha=-1$. 
 Hence, in all cases, one may assume $\gamma$ has a non-trivial root of unity $\zeta$ as an eigenvalue.

By the Cayley-Hamilton theorem we have $[M(\zeta):M]\leq 3$ and since $D\neq 7$  we may assume (after possibly raising $\gamma$ to some power)  that $\zeta$ has order $2$ or $3$.
By the congruence condition,  each prime $p$ dividing $D$ has to divide also the norm of $\zeta-1$, hence
$D$ can be divisible only by the primes $2$ or $3$. Thus $D\in\{ 3,4,8,24\}$,  leading to a
contradiction.
\end{proof}

\section{Irregularity of arithmetic varieties}

Let  $z\mapsto \bar z$ be the non-trivial automorphism of $M/F$ and let
$\omega$ be the quadratic character   of $M/F$, viewed as a Hecke character of $F$.
Put  $M^1=\{z\in M^\times\mid z\bar z=1\}$, which we will view as an algebraic torus over $F$ and denote by  $\A_M^1$ its $\A_F$-points.

We denote by $q(X)$ the {\it irregularity} of $X$, given by the dimension of $\rH^0(X, \Omega^1_{X})$.

\subsection{Automorphic forms contributing to the irregularity}
Fix  a maximal compact subgroup $K_\infty \simeq ({\rm U}(n) \times {\rm U}(1)) \times {\rm U}(n+1)^{d-1}$ of the real linear   Lie group $G_\infty=G(F\otimes_{\Q}\R)\simeq {\rm U}(n,1)\times {\rm U}(n+1)^{d-1}$. 
Let  $\Gamma\subset G(F)$ be a  lattice such that $\bar\Gamma$ is torsion free. 

Since 
 $Y_\Gamma$ is the Eilenberg-MacLane space of $\overline\Gamma$, there is
 a decomposition:
\begin{equation}\label{decomposition}
\rH^1(Y_\Gamma, \C) \, \simeq \,  \rH^1(\bar\Gamma, \C)   \, \simeq \, \bigoplus_{\pi_\infty} \,
\rH^1(\Lie(G_\infty), K_\infty; \pi_\infty)^{\oplus m(\pi_\infty, \Gamma)},
\end{equation}
where $\pi_\infty$ runs over irreducible unitary representations of $G_\infty$ occurring in the discrete spectrum of $\mathrm{L}^2(\Gamma\backslash G_\infty)$ with multiplicity $m(\pi_\infty, \Gamma)$, and
$\rH^\ast(\Lie(G_\infty), K_\infty; \pi_\infty)$ is the relative Lie algebra cohomology.
 When $\Gamma$ is cocompact, the entire $\mathrm{L}^2$-spectrum is discrete and this decomposition follows from \cite[XIII]{borel-wallach}. When $\Gamma$ is  non-cocompact,
 one gets by \cite[\S4.4-4.5]{borel-casselman} such a decomposition, but  only for the $\mathrm{L}^2$-cohomology of $Y_\Gamma$. However, one knows (see \cite[\S1]{murty-ramakrishnan}) that
$\rH^1(Y_\Gamma, \C)$ is isomorphic to the middle intersection cohomology (in degree $1$) of
$Y_\Gamma^\ast$, which is in turn isomorphic to the $\mathrm{L}^2$-cohomology (in degree $1$) of $Y_\Gamma$.

By \cite[VI.4.11]{borel-wallach}  there are exactly two irreducible non-tempered unitary representations of ${\rm SU}(n,1)$ with trivial central character, denoted  $J_{1,0}$ and $J_{0,1}$, each of whose relative Lie algebra cohomology in degree $1$ does not vanish and  is in fact one dimensional.  Since ${\rm U}(n,1)$ is the product of its center with ${\rm SU}(n,1)$, $J_{1,0}$ and $J_{0,1}$ can be uniquely extended to representations  $\pi^+$ and $\pi^-$, say, of  ${\rm U}(n,1)$ with trivial central characters
(when $n=2$ those are the representations $J^\pm$ from \cite[p.178]{rogawski1}). It follows that at the distinguished Archimedean place $\iota$, where $G(F_\iota)={\rm U}(n,1)$, we have
$$\rH^1(\Lie({\rm U}(n,1)), {\rm U}(n)\times {\rm U}(1); \pi_\iota)=\begin{cases} \C \text{ , if } \pi_\iota=\pi^\pm, \\ 0\text{ , otherwise. }\end{cases}$$
Moreover the only irreducible unitary representation  with non-zero relative Lie algebra cohomology in degree $0$ is the trivial representation $\1$, which does not contribute in degree $1$;
in particular $\pi^\pm\neq \1$.
This  allows us to deduce from  \eqref{decomposition} the following formula
  \begin{equation}\label{h-formula}
\dim_\C \rH^1(Y_\Gamma, \C)= m(\pi^+\otimes \1^{\otimes d-1}, \Gamma)+
m(\pi^-\otimes \1^{\otimes d-1}, \Gamma),
\end{equation}
where $\pi^\pm$ are viewed as representations of $G(F_\iota)={\rm U}(n,1)$ and
 $\1^{\otimes d-1}$ denotes the trivial representation of ${\rm U}(n+1)^{d-1}$.

By  \cite[\S1]{murty-ramakrishnan},  $\rH^1(Y_\Gamma, \C)$ is isomorphic to $\rH^1(X_\Gamma, \C)$, hence admits a pure Hodge structure of weight $1$ and its dimension is given by $2q(X_\Gamma)$. 
In particular, the natural map $\rH^0(X_\Gamma, \Omega^1_{X_\Gamma})\to \rH^0(Y_\Gamma, \Omega^1_{Y_\Gamma})$
is an isomorphism, {\it i.e.}, 
\begin{equation}\label{toroidal}
q(Y_\Gamma)=q(X_\Gamma). 
\end{equation}
It is known that  $\pi^+\otimes \1^{\otimes d-1}$ (resp. $\pi^-\otimes \1^{\otimes d-1}$) contributes to $\rH^0(Y_\Gamma, \Omega^1_{Y_\Gamma})$ (resp.  $\rH^1(Y_\Gamma, \Omega^0_{Y_\Gamma})$). Since the latter two groups have the same dimension, it follows from \eqref{h-formula}  that 
 \begin{equation}\label{q-formula}
q(Y_\Gamma)= m(\pi^+\otimes \1^{\otimes d-1}, \Gamma)=
m(\pi^-\otimes \1^{\otimes d-1}, \Gamma).
\end{equation}

We will now focus on the case when $\Gamma$ is a congruence subgroup and switch  to the adelic setting which is better suited for computing  the irregularity.
For  any  open compact subgroup $K$ of $G(\A_{F,f})$, where $\A_{F,f}$ denotes the ring of finite adeles  of $F$, we consider the adelic quotient
\begin{equation}\label{quotient}
Y_{K} = G(F)\backslash G(\A_F)/K K_\infty.
\end{equation}

Let $G^1=\ker(\det: G\to M^1)$ be the derived group of $G$.  Since $G^1$ is simply connected and $G^1_\infty$ is non-compact,
$G^1(F)$ is dense in $G^1(\A_{F,f})$   by strong approximation  (see~\cite[Theorem 7.12]{platonov-rapinchuk}). It follows that the group of connected components of $Y_K$  is isomorphic to  the  idele class group:
 \begin{equation}\label{components}
 \pi_0(Y_K)\simeq \A_M^1/M^1 \det(K)M_\infty^1.
\end{equation}
To describe  each connected component of $Y_K$, choose  $t_i\in G(\A_{F,f})$, $1\leq i\leq h$, such that $(\det(t_i))_{1\leq i\leq h}$ forms a complete set of representatives of $\A_M^1/M^1 \det(K)M_\infty^1$, and let $\Gamma_i=G(F)\cap t_i K t_i^{-1} G_\infty$. Then
\begin{equation}\label{classical-adelic}
 G(F)\backslash G(\A_F)/K = \coprod_{i=1}^h \Gamma_i \backslash G_\infty
\text{  and }Y_{K} = \coprod_{i=1}^h Y_{\Gamma_i}.
\end{equation}

Therefore  \eqref{decomposition} and \eqref{q-formula}  can be rewritten as:
\begin{equation}\label{decomposition-adelic}
\rH^1(Y_K, \C) \, \simeq \,  \bigoplus_{\pi= \pi_\infty\otimes\pi_f}
\left(\rH^1(\Lie(G_\infty), K_\infty; \pi_\infty) \otimes \pi_f^K\right)^{\oplus m(\pi)} \text{  and}
\end{equation}
  \begin{equation}\label{q-formula-adelic}
q(Y_K)= \sum_{\substack{\pi= \pi_\infty\otimes\pi_f \\  \pi_\infty=\pi_{\iota}\otimes \1^{\otimes d-1}, \pi_{\iota} \simeq \pi^+}} m(\pi)\dim(\pi_f^K)=\sum_{\substack{\pi= \pi_\infty\otimes\pi_f \\  \pi_\infty=\pi_{\iota}\otimes \1^{\otimes d-1}, \pi_{\iota} \simeq \pi^-}} m(\pi)\dim(\pi_f^K),
 \end{equation}
where $\pi $ runs over all automorphic representation of $G(\A_F)$
occurring discretely,   with multiplicity $m(\pi)$,   in  $\mathrm{L}^2(G(F)\backslash G(\A_F))$.

\subsection{Irregularity growth}

Non-vanishing of $q(Y_\Gamma)$ for sufficiently small congruence subgroups is known by  a theorem  of    Shimura  \cite[Theorem 8.1]{shimura}, extending earlier works of Kazhdan and Borel-Wallach \cite[VIII]{borel-wallach}. Our Diophantine results require however the stronger assumption that   $q(Y_\Gamma)>n$, which we  establish as a corollary of
the next proposition.

\begin{prop}\label{growth}
For every open compact subgroup $K\subset G(\A_{F,f})$ such  that $q(Y_K)\neq 0$
there exist infinitely many primes $\fp$ of $F$ for which one can find 
 an explicit  finite index subgroup $K'\subset K$ differing form $K$ only at $\fp$,  such that
 $\pi_0(Y_{K'})=\pi_0(Y_K)$ and $q(Y_{K'})>q(Y_K)$.
\end{prop}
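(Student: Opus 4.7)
The strategy is to shrink $K$ at a single carefully chosen prime $\fp$ so that some automorphic representation $\pi$ contributing to $q(Y_K)$ acquires strictly more fixed vectors, while the set of connected components $\pi_0(Y_K)$ is preserved. From \eqref{q-formula-adelic}, the hypothesis $q(Y_K)\neq 0$ produces an automorphic representation $\pi = \pi_\infty \otimes \pi_f$, occurring discretely in $\mathrm{L}^2(G(F)\backslash G(\A_F))$ with multiplicity $m(\pi)\geq 1$, with $\pi_\infty \simeq \pi^+ \otimes \1^{\otimes d-1}$ and $\pi_f^K \neq 0$. Since $\pi^+$ is an infinite-dimensional representation of ${\rm U}(n,1)$, the global $\pi$ is not one-dimensional, and hence $\pi_\fp$ is infinite-dimensional for all but finitely many finite primes $\fp$.

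Next I would select $\fp$ to be (i) split in $M/F$, so that $G(F_\fp) \simeq \GL(n+1,F_\fp)$ and $K_\fp \simeq \GL(n+1,\fo_\fp)$ is hyperspecial with $\dim \pi_\fp^{K_\fp} = 1$, and (ii) such that $\pi_\fp$ is an irreducibly induced unramified principal series $I(\chi_1,\ldots,\chi_{n+1})$. Condition (i) holds for an infinite set of primes by Chebotarev, and condition (ii) for all but finitely many of them: because $\pi_\infty$ is cohomological, the unramified Satake parameters of $\pi_\fp$ are tempered (by the known cases of Ramanujan for cohomological representations of unitary groups, via base change to $\GL(n+1)/M$), and tempered parameters yield irreducible inductions. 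Setting $K' = I_\fp \cdot \prod_{\fq\neq \fp} K_\fq$, where $I_\fp \subset K_\fp$ is the Iwahori subgroup, one has $\det(I_\fp) = \fo_\fp^\times = \det(K_\fp)$, so \eqref{components} gives $\pi_0(Y_{K'}) = \pi_0(Y_K)$.

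By the Iwahori-Matsumoto-Borel theorem $\dim \pi_\fp^{I_\fp} = |W| = (n+1)!$, so the $\pi$-summand of \eqref{q-formula-adelic} grows by
\[
m(\pi)\bigl((n+1)!-1\bigr)\prod_{\fq\neq\fp}\dim \pi_\fq^{K_\fq} > 0.
\]
Since $K'\subset K$ implies $(\pi')_f^{K'} \supseteq (\pi')_f^{K}$ for every automorphic $\pi'$, no summand decreases, and one concludes $q(Y_{K'}) > q(Y_K)$. The crux of the argument is ensuring (ii) at infinitely many primes; should one wish to sidestep appealing to Ramanujan, an alternative is to use a deeper $\Gamma_0(\fp^m)$-type subgroup, which still preserves the determinant image and whose fixed-vector dimension tends to infinity as $m\to\infty$ for any infinite-dimensional $\pi_\fp$.
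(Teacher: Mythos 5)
Your overall architecture matches the paper's: extract from \eqref{q-formula-adelic} an automorphic $\pi$ with $\pi_\iota\simeq\pi^+$ and $\pi_f^K\neq 0$, shrink $K$ only at a split prime $\fp$ from the hyperspecial $K^0_\fp$ to a smaller parahoric, observe that the determinant image is unchanged so \eqref{components} preserves $\pi_0(Y_K)$, and win by gaining fixed vectors in $\pi_\fp$. But the justification of your condition (ii) is false, and false precisely for the representations at hand. The $\pi$ contributing to $q(Y_K)$ are exactly the \emph{non-tempered} ones: $\pi^+$ and $\pi^-$ are the non-tempered cohomological representations of ${\rm U}(n,1)$ (they contribute to $\rH^1$ of an $n$-dimensional variety, below the middle degree), and globally such $\pi$ are CAP/endoscopic --- for $n=2$ they are Rogawski's packets $\Pi(\lambda,\nu)$, whose component at a split place is the Langlands quotient of an induction involving $|\det(h_2)|_v^{3/2}$. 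So the Satake parameters of $\pi_\fp$ are \emph{not} tempered, the Borel-induced unramified principal series containing $\pi_\fp$ is reducible, and $\dim\pi_\fp^{I_\fp}$ is not $(n+1)!$; Ramanujan cannot be invoked here.

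The argument survives on weaker input, which is what the paper actually uses: by Tadi\'c's classification of the spherical unitary dual of $\GL(n+1,F_\fp)$, the unramified unitary $\pi_\fp$ is \emph{fully} induced from an unramified character of some parabolic $P$, and $P$ is proper unless $\pi_\fp$ is a character of $\det$ --- a case excluded by strong approximation for $G^1$ together with $\pi_\iota\not\simeq\1$ (your assertion that ``the global $\pi$ is not one-dimensional, hence $\pi_\fp$ is infinite-dimensional for almost all $\fp$'' also needs this strong-approximation step, not just the infinite-dimensionality of $\pi^+$). One then gets $\dim\pi_\fp^{I_\fp}=|W_P\backslash W|\geq n+1\geq 2$, which is all that is required; the paper works with the parahoric $K_{0,P}(\fp)$ and counts $P(\F_q)\backslash G(\F_q)/P(\F_q)$ to the same effect. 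Your fallback via deep level $K_0(\fp^m)$ is also viable, but as written the main line of your proof rests on a temperedness claim that fails for every representation to which it is applied.
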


 \begin{proof}
 Since $q(Y_{K})\neq 0$ by assumption, formula \eqref{q-formula-adelic} implies that there exists an  automorphic representation $\pi$
with $\pi_\infty=\pi_{\iota}\otimes \1^{\otimes d-1}$, $\pi_{\iota} \simeq \pi^+$,  such that $m(\pi)\neq 0$   and  $\pi_f^K\neq 0$.

 Let $\fp$ be a prime of $F$ which splits  in $M$, so that $G(F_\fp)=\GL(n+1, F_\fp)$.
 Assume that  $K=K^0_\fp \times K^{(\fp)}$ where  $K^0_\fp= \GL(n+1, \fo_\fp)$ is the standard maximal compact subgroup of  $G(F_\fp)$ and $K^{(\fp)}$ is the part of $K$ away from $\fp$.
In particular $\pi_\fp$  is unramified. Moreover   $\pi_\fp$  is a unitary representation, since it
is a local component of an automorphic representation.
 By the main result of \cite{tadic}, $\pi_{\fp}$ is then the full induced representation of $\GL(n+1,F_\fp)$ from an unramified character $\mu$  of a parabolic subgroup
 $P(F_\fp)$.

 We claim that,  in our case,  $P$ is a proper parabolic subgroup. Otherwise $\pi_\fp$ will be  one dimensional, hence $G^1(F_\fp)$ will act trivially. Since by strong approximation  $G^1(F)G^1(F_\fp)$ is dense in $G^1(\A_F)$, the latter will act trivially on any smooth vector in $\pi$, contradicting  the fact that $\pi_{\iota} \not\simeq \1$.

Let     $\F_q=\fo/\fp$  be the residue field of $F_\fp$ and denote by
$P(\F_q)$ the corresponding  parabolic subgroup of $G(\F_q)$.  Let $K_{0,P}(\fp)$ is the inverse image of $P(\F_q)$ under the reduction modulo $\fp$ homomorphism
$\GL(n+1,\fo_\fp)\to \GL(n+1,\F_q)$.

Consider  $ K' = K_{0,P}(\fp)K^{(\fp)}$.
Since   $ \det(K')=\det(K)$,  \eqref{components} implies that   $\pi_0(Y_{K'})=\pi_0(Y_K)$.
 Moreover, formula \eqref{q-formula-adelic} implies that
$$q(Y_{K'})  \geq q(Y_{K}) + \dim(\pi_\fp^{K_{0,P}(\fp)})- \dim(\pi_\fp^{K^0_\fp})=q(Y_{K}) + \dim(\pi_\fp^{K_{0,P}(\fp)})- 1, $$
hence it suffices to show that the $K_{0,P}(\fp)$-invariants in $\pi_{\fp}$ form at least a $2$-dimensional space. We claim that we even have
\begin{equation}\label{split}
\dim(\pi_\fp^{K_{0,P}(\fp)})\geq n+1.
\end{equation}
Indeed, since $\mu$ is unramified, its restriction to $P\cap K^0_\fp$ is trivial. Therefore
by the Iwasawa decomposition $G(F_\fp)=P(F_\fp)\cdot K^0_\fp$ the restriction of
$\pi_\fp$ to $K^0_\fp$ is isomorphic to $ \Ind_{P(F_\fp)\cap K^0_\fp}^{K^0_\fp}(\1)$.

The subspace of $K_{0,P}(\fp)$-invariant vectors in $ \Ind_{P(F_\fp)\cap K^0_\fp}^{K^0_\fp}(\1)$ identifies naturally with the space of $\C$-valued functions on the set:
  $$
\left(P(F_{\fp})\cap K^0_\fp \right)\backslash K^0_\fp / K_{0,P}(\fp) \simeq  P(\F_q)\backslash  G(\F_q)/P(\F_q).
$$
 and the number of such double cosets is the number  of  double cosets of the Weyl group of $G$
relative to the  subgroup attached to $P$. We may assume, for getting a lower bound, that $P$ is maximal (and proper). The smallest number appears for $P$ of type $(n,1)$ and it is $n+1$. The claim follows.
\end{proof}

\begin{cor}\label{growth-cor}
For every arithmetic subgroup $\Gamma\subset G(F)$
 there exists an explicit   subgroup $\Gamma'$ of finite index  in  $\Gamma$ such that  $q(Y_{\Gamma'})>q(Y_\Gamma)$.
\end{cor}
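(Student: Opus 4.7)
The plan is to bootstrap the classical statement from the adelic Proposition~\ref{growth} and Shimura's non-vanishing theorem \cite[Theorem~8.1]{shimura}, using the standard fact that the geometric connected components of an adelic Shimura quotient are pairwise isomorphic as complex analytic orbifolds. After replacing $\Gamma$ by a torsion-free subgroup of finite index if necessary (for instance by imposing a principal congruence condition and invoking Lemma~\ref{covers}), I will operate in the manifold setting.

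For any ideal $\fN\subset \fO$, consider $\Gamma_\fN := \Gamma\cap \Gamma(\fN)$; since $\Gamma$ is commensurable with $G(\fo) \supset \Gamma(\fN)$, this subgroup has finite index in both $\Gamma$ and $\Gamma(\fN)$. More generally, for any open compact $K\subset K(\fN)$, the classical subgroup $\Gamma^K := G(F)\cap KG_\infty$ is contained in $\Gamma(\fN)$, and $\Gamma\cap \Gamma^K$ has finite index in $\Gamma$. The \'etale cover $Y_{\Gamma\cap \Gamma^K}\to Y_{\Gamma^K}$ gives, by pullback of holomorphic 1-forms, the inequality $q(Y_{\Gamma\cap \Gamma^K}) \geq q(Y_{\Gamma^K})$. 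It therefore suffices to exhibit an open compact $K$ with $q(Y_{\Gamma^K}) > q(Y_\Gamma)$; then $\Gamma' := \Gamma\cap \Gamma^K$ works.

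To construct such $K$, I would proceed adelically: fix $\fN_0$ for which $q(Y_{\Gamma(\fN_0)})>0$ (this exists by Shimura), set $K_0 := K(\fN_0)$, and apply Proposition~\ref{growth} iteratively at successively chosen split primes. This produces a descending chain $K_0\supsetneq K_1 \supsetneq \cdots$ with $\pi_0(Y_{K_i})$ of constant cardinality $h$ and $q(Y_{K_i})$ strictly increasing to infinity. The $h$ geometric components of each $Y_{K_i}$ are permuted transitively and by isomorphisms of complex orbifolds under the action of the center $Z(\A_{F,f}) = \A_{M,f}^1$, and are therefore mutually isomorphic; consequently $q(Y_{K_i}) = h\cdot q(Y_{\Gamma^{K_i}})$, so $q(Y_{\Gamma^{K_i}})\to\infty$. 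Choosing $i$ with $q(Y_{\Gamma^{K_i}})> q(Y_\Gamma)$ and taking $\Gamma' := \Gamma\cap \Gamma^{K_i}$ completes the argument; note that this $\Gamma'$ is explicit since at each step of the iteration the refined $K_{i+1}$ is explicitly produced by the proof of Proposition~\ref{growth}.

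The delicate step is the isomorphism of the components of $Y_{K_i}$. This rests on the standard Shimura-variety fact that the center $Z(G)$ acts on the adelic quotient by isomorphisms and permutes the components transitively—the crucial point being the surjectivity of $Z(\A_{F,f})=\A_{M,f}^1$ onto the component group $\A_M^1/M^1\det(K_i)M_\infty^1$ of~\eqref{components}. Once this is granted, the remainder of the proof is routine bookkeeping in the classical/adelic dictionary of \eqref{classical-adelic}--\eqref{q-formula-adelic}.
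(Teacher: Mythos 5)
Your overall architecture --- Shimura non-vanishing, iterating Proposition \ref{growth} to drive $q(Y_{K_i})\to\infty$ while keeping $\pi_0$ fixed, then returning to the classical group via $\Gamma\cap\Gamma^{K_i}$ and the fact that irregularity cannot decrease under a finite \'etale cover --- is the same as the paper's. The gap is precisely in the step you flag as delicate: the claim that the connected components of $Y_{K_i}$ are pairwise isomorphic because the center $\A_{M,f}^1$ surjects onto the component group. The center of $G$ consists of the scalar matrices $z\cdot I_{n+1}$ with $z\in M^1$, and $\det(z\cdot I_{n+1})=z^{n+1}$; so under the identification \eqref{components} the center acts on $\pi_0(Y_K)\simeq \A_M^1/M^1\det(K)M_\infty^1$ by translation by $(n+1)$-st powers only. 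Its image is the subgroup of $(n+1)$-st powers, which is proper whenever $n+1$ is not coprime to the order of the component group (for $n=2$, whenever $3$ divides that order). The asserted transitivity therefore fails in general, and with it the identity $q(Y_{K_i})=h\cdot q(Y_{\Gamma^{K_i}})$ on which your conclusion rests.

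The paper avoids exactly this point in two ways. In the proof proper it never asserts that the components have equal irregularity: it first iterates Proposition \ref{growth} until $q(Y_{K'})>h\cdot q(Y_\Gamma)$ (note the extra factor of $h$, which your iteration does not build in), writes $Y_{K'}=\coprod_{i=1}^h Y_{\Gamma'_i}$, sets $\Gamma'=\cap_{i=1}^h\Gamma'_i$, and uses $q(Y_{\Gamma'\cap\Gamma})\geq q(Y_{\Gamma'})\geq \max_i q(Y_{\Gamma'_i})\geq \frac{1}{h}\sum_i q(Y_{\Gamma'_i})=\frac{1}{h}q(Y_{K'})>q(Y_\Gamma)$. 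In the remark following the proof it observes that the components do in fact share the same irregularity, but for an arithmetic rather than a complex-analytic reason: by Shimura's theory of canonical models they are Galois conjugates of one another, and $q=\dim\rH^0(X,\Omega^1_X)$ is an $\Aut(\C)$-invariant; they need not be isomorphic as complex manifolds. Either of these devices repairs your argument; as written, the surjectivity of the center onto $\pi_0$ is not a ``standard Shimura-variety fact'' but a false statement.
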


\begin{proof}

By \cite[Theorem 8.1]{shimura} there exists an open compact subgroup $K$ of $G(\A_{F,f})$ such that $q(Y_{K})\neq 0$
(one might take  the principal congruence subgroup of level $2N$, which is included in the index $4$ subgroup of the
principal congruence subgroup of level $N$ considered by Shimura).
Denote by $h$ the cardinality of $\pi_0(Y_{K})$.
Applying recursively Proposition \ref{growth}  yields a finite index subgroup $K'\subset K$,  such that
 $\pi_0(Y_{K'})=\pi_0(Y_K)$ and $$q(Y_{K'})> h \cdot q(Y_\Gamma).$$

 Write
$ Y_{K'} = \coprod_{i=1}^h Y_{\Gamma'_i}$ as in  \eqref{classical-adelic},  and let  $\Gamma'=  \cap_{i=1}^h \Gamma'_i$. Since the irregularity cannot decrease by going to a finite cover, one has:
$$q(Y_{\Gamma'\cap \Gamma})\geq q(Y_{\Gamma'})\geq \underset{1\leq i\leq h}{\max} q(Y_{\Gamma'_i})\geq
\frac{1}{h} \sum_{i=1}^h q(Y_{\Gamma'_i})=\frac{1}{h} q(Y_{K'})>  q(Y_\Gamma).$$
\end{proof}

One can simplify the final step of the proof above  and use any  $\Gamma'_i$  instead of  $  \cap_{i=1}^h \Gamma'_i$, since Shimura's theory of canonical models implies that the  connected components of $Y_{K'}$  are all Galois conjugates,  hence share the same irregularity.

\section{Irregularity of arithmetic surfaces}

The positivity of $q(Y_\Gamma)$ is an essential ingredient in the proof of our Diophantine results.

The starting point for the arithmetic application of this paper was our knowledge that Rogawski's classification \cite{rogawski1,rogawski2}  of cohomological automorphic forms on $G$, combined with  some local representation theory, would allow us to compute
$q(Y_\Gamma)$ precisely and show that it does not vanish for some explicit  congruence subgroups $\Gamma$.
Marshall \cite{marshall} gives sharp asymptotic bounds for $q(Y_\Gamma)$ when $\Gamma$ shrinks, also by using Rogawski's theory.

In  this section we assume  that $n=2$.

\subsection{Rogawski's theory}

Rogawski \cite{rogawski1, rogawski2} gives  an explicit description, in terms of global Arthur packets,
 of the automorphic representations $\pi$ of $G(\A_F)$  occurring in \eqref{q-formula-adelic}, which we will now present.

Let $T$ denote the maximal torus of the standard upper-triangular Borel subgroup $B$   of $G$.

 Let $G'$ denote the quasi-split unitary group associated to $M/F$, so that $G$ is an inner form of $G'$.
 Note that  $G_v \simeq G_v'$ for any finite place $v$ and that  $G \simeq G'$ only for  $d=1$.

Let $\lambda$ be a unitary  Hecke character of $M$ whose restriction to $F$ is $\omega$,  and let $\nu$ be a unitary   character of  $\A_M^1/M^1$.

At a  place $v$ of $F$ which does not split in $M$, which includes any Archimedean $v$, 
the local Arthur packet $\Pi'(\lambda_v,\nu_v)$
consists of a square-integrable representation $\pi_s(\lambda_v,\nu_v)$ and a non-tempered representation $\pi_n(\lambda_v,\nu_v)$ of $G'(F_v)$. These constituents of the packet
can be described  (see   \cite[\S12.2]{rogawski1}) as the unique subrepresentation and the corresponding (Langlands) quotient representation of  the induction of the character of $B(F_v)$  which is trivial on the unipotent subgroup  and given on   $T(F_v)$  by:
\begin{equation}\label{torus-char}
(\bar\alpha,\beta,\alpha^{-1})\mapsto \lambda_v(\bar\alpha)|\alpha|_{M_v}^{3/2} \nu_v(\beta)\text{, where } 
\alpha\in  M_v^\times, \beta\in M_v^1.
\end{equation}
If one considers  unitary induction, then one has to divide the above character by  the square root of the modular character
of  $B(F_v)$, that is to say by $(\bar\alpha,\beta,\alpha^{-1})\mapsto |\alpha|_{M_v}$.

 At any  finite place $v$ of $F$ which splits in $M$, $G_v\simeq G'_v$ also splits and is isomorphic to
 $\GL(3,F_v)$. The  local Arthur packet
 $\Pi'(\lambda_v,\nu_v)$ has a unique element  $\pi_n(\lambda_v,\nu_v)$ which is  induced from the  character:
 $$\left (\begin{array}{c|@{}c} h_2 & \begin{array}{c} \ast \\ \ast \end{array}  \\ \hline
 \begin{array}{cc} 0 & 0 \end{array}   & h_1
\end{array}\right)\mapsto \lambda_v(\det(h_2)) |\det(h_2)|_v^{3/2} \nu_v(h_1)$$
 of the maximal parabolic of type $(2,1)$ in $\GL(3,F_v)$ (see~\cite[\S1]{rogawski2}).

For almost all $v$, $\pi_n(\lambda_v,\nu_v)$ is necessarily unramified. We set
$$\Pi'(\lambda,\nu)=\left\{\otimes_v \pi_v | \pi_v\in \Pi'(\lambda_v,\nu_v) \text{ for all } v \text{ , and }  \pi_v\simeq\pi_n(\lambda_v,\nu_v) \text{ for almost all } v\right\}.$$

Recall that a CM type $\Phi$  on $M$ is the choice, for each Archimedean place $v$ of $F$, of
an isomorphism $M\otimes_{F,v}\R\simeq \C$.

\begin{defn} \label{CM-type}
Let $\Xi$  denote the set of  pairs $(\lambda,\nu)$ where $\lambda$ is a unitary  Hecke character of $M$ whose restriction to $F$ is $\omega$,  and  $\nu$ is a unitary   character of  $\A_M^1/M^1$, such that 
\begin{equation}\label{weight1}
\lambda_\infty(z)=\prod_{v\in \Phi} \frac{\bar{z}_v}{|z_v|}\text { , for all } z\in M_\infty\,
\text { and } \\
\end{equation}
$$ \nu_\infty(z)=\prod_{v\in \Phi} z_v\text { , for all } z\in M_\infty^1, $$
for some CM type $\Phi$ on $M$. 
\end{defn}

\begin{thm}[Rogawski \cite{rogawski1, rogawski2}]\label{thm-rogawski}
\begin{enumerate}
\item For every  $(\lambda,\nu)\in \Xi$, $\Pi'(\lambda,\nu)$ is a global Arthur packet for $G'$ such that for all infinite  $v$,  $\pi_n(\lambda_v,\nu_v)=\pi^+$ or $\pi^-$.
\item $\Pi'(\lambda,\nu)$ can be transferred to an Arthur packet $\Pi(\lambda,\nu)$ on $G$ such that
$\Pi(\lambda_v,\nu_v)=\{\1\}$ at all the Archimedean places $v\neq \iota$, and
$\Pi(\lambda_v,\nu_v)=\Pi'(\lambda_v,\nu_v)$ at the remaining places.
\item Denote by $W(\lambda\nu_M)\in \{\pm 1\}$  the root number of 
Hecke character  $\lambda\nu_M$, where  $\nu_M(z)= \nu(\bar z/z)$ for $z\in \A_M^\times$, and by
 $s(\pi)$  the number of finite  places $v$ such that $\pi_v\simeq \pi_s(\lambda_v,\nu_v)$. Then
$$\pi \in \Pi(\lambda,\nu)\text { is automorphic if and only if }W(\lambda\nu_M)=(-1)^{d-1+s(\pi)}.$$
Moreover, in this case the global multiplicity $m(\pi)$ is $1$.
\item Any automorphic representation $\pi$ of $G(\A_F)$  such that  $\pi_{\iota} \simeq \pi^\pm$
and  $\pi_v=\1$ at all the Archimedean places $v\neq \iota$, belongs to
$\Pi(\lambda,\nu)$ for some  $(\lambda,\nu)\in \Xi$.
\end{enumerate}
\end{thm}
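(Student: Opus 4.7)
The strategy is to deduce (i)--(iv) as direct consequences of Rogawski's endoscopic classification of automorphic representations of $G'$ and the Jacquet--Langlands type transfer to the inner form $G$, after matching the infinity type imposed in Definition \ref{CM-type} with the cohomological parameters producing $\pi^\pm$. The statement is essentially a translation of the main results of \cite{rogawski1, rogawski2}; the work on our side is limited to identifying the relevant places where each part applies and to verifying the compatibility of local normalizations.

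For (i), at any non-split Archimedean $v$ the infinity type \eqref{weight1} forces the character appearing in \eqref{torus-char} to be cohomological and regular. A direct computation, after subtracting the half-modulus of $B(F_v)$, identifies the induced representation with the Adams--Johnson package attached to the $\theta$-stable parabolic cut out by $\Phi$: its unique subrepresentation is the discrete series $\pi_s(\lambda_v,\nu_v)$ and its Langlands quotient is the unique non-tempered cohomological representation $\pi_n(\lambda_v,\nu_v)$ of $G'(F_v)$ with trivial central character. Specializing to $v=\iota$ and comparing with the formulas on \cite[p.~178]{rogawski1}, $\pi_n(\lambda_\iota,\nu_\iota)$ is then recognized as $\pi^+$ or $\pi^-$ according as $\iota\in\Phi$ or not. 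For (ii), one invokes Rogawski's transfer between the quasi-split $G'$ and its inner form $G$ carried out in \cite[Chapter 14]{rogawski1}: finite places are automatic since $G_v\simeq G_v'$, while at each compact Archimedean $v\neq \iota$ both constituents of $\Pi'(\lambda_v,\nu_v)$ have the same $L$-parameter as $\1$, so the transfer packet on ${\rm U}(n+1)$ is forced to be the singleton $\{\1\}$.

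Part (iii) follows from Rogawski's multiplicity formula: the global component group of the Arthur parameter attached to $(\lambda,\nu)$ is $\Z/2\Z$, the local pairing assigns $-1$ to each $\pi_s(\lambda_v,\nu_v)$ and $+1$ to each $\pi_n(\lambda_v,\nu_v)$ (with a trivial contribution at the places that split in $M$), and the canonical global sign character is the root number $W(\lambda\nu_M)$, twisted by the $(-1)^{d-1}$ arising from the $d-1$ compact Archimedean places. Balancing the local and global signs gives the stated parity condition $W(\lambda\nu_M)=(-1)^{d-1+s(\pi)}$, and the assertion $m(\pi)=1$ is part of the same theorem. Statement (iv) is then obtained from Rogawski's exhaustion result: any discretely occurring $\pi$ whose Archimedean component is $\pi^\pm$ at $\iota$ and trivial elsewhere is CAP with respect to the $(2,1)$-parabolic, and such CAP representations are exactly the members of the packets $\Pi(\lambda,\nu)$ for $(\lambda,\nu)\in \Xi$.

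The hard part, which is the entire content of Rogawski's monograph and which we simply quote, is the stabilization of the trace formula for ${\rm U}(3)$ underlying both the transfer in (ii) and the explicit endoscopic sign computation in (iii). What remains on our side is the bookkeeping: confirming that Definition \ref{CM-type} isolates precisely those $(\lambda,\nu)$ producing $\pi^\pm$ at $\iota$ and $\1$ at the remaining infinite places, and translating Rogawski's abstract global sign into the concrete parity $W(\lambda\nu_M)=(-1)^{d-1+s(\pi)}$.
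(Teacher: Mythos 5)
Your proposal is correct and follows essentially the same route as the paper: both reduce everything to Rogawski's endoscopic classification and his transfer between $G'$ and its inner form, the only work being the Archimedean bookkeeping that matches Definition \ref{CM-type} to $\pi^\pm$ at $\iota$ and to $\1$ at the compact places, plus the translation of the multiplicity formula of \cite[Theorem 1.1]{rogawski2} into the parity condition $W(\lambda\nu_M)=(-1)^{d-1+s(\pi)}$. One small correction: at a compact place $v\neq\iota$ the two constituents of $\Pi'(\lambda_v,\nu_v)$ share the \emph{Arthur} parameter (equivalently the infinitesimal character $\rho$) of $\1$, not its $L$-parameter — the paper instead deduces $\Pi(\lambda_v,\nu_v)=\{\1\}$ from the explicit $(r,s)$ highest-weight recipe of \cite[p.397]{rogawski2} — but this slip does not affect the conclusion.
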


\begin{proof} Let $H= {\rm U}(2)\times {\rm U}(1)$ be the unique elliptic endoscopic group, shared by $G'$ and all its inner forms over $F$.
The embedding of $L$-groups ${}^LH\hookrightarrow {}^LG ={}^LG'$   depends on the choice of a
Hecke character $\mu$ of $M$, whose restriction to $F$ is $\omega$, and
 allows one to transfer discrete $L$-packets on $H$ to automorphic $L$-packets on $G$ (see \cite[\S13.3]{rogawski2}).
  The character $\mu$ being fixed, any pair of  characters $(\lambda,\nu)\in \Xi$ uniquely determines a (one-dimensional) character of $H$, whose endoscopic  transfer is $\Pi'(\lambda,\nu)$
  (see \cite[\S1]{rogawski2}).

Denote by $W_F$ (resp. $W_M$) the  global Weil group of $F$ (resp. $M$).
By {\it loc.cit.}, the restriction  to $W_M$ of the global Arthur parameter
$$W_F\times {\rm SL}(2, \C)\to \, {}^LG = \GL(3, \C) \rtimes \mathrm{Gal}(M/F)$$
 of $\Pi'(\lambda,\nu)$ is given by the $3$-dimensional representation
$(\lambda \otimes \mathrm{St}) \oplus (\nu_M \otimes \1)$, where $\mathrm{St}$ (resp. $\1$) is the
standard $2$-dimensional (resp. trivial) representation of ${\rm SL}(2, \C)$.
By \cite[p.62]{langlands} the restrictions to $\C^\times$ of the Langlands parameters of $\pi^+$ and $\pi^-$ are given by
$z \mapsto \left(\begin{smallmatrix}\bar{z}& 0 & 0\\0 & z/{\bar z}&0\\0 & 0 &  z^{-1} \end{smallmatrix}\right)$
and its complex conjugate, hence  for every Archimedean place $v$ one has  $\pi_n(\lambda_v,\nu_v)=\pi^+$ or $\pi^-$,
depending on the choice of isomorphism $M\otimes_{F,v}\R\simeq \C$ in the CM type $\Phi$.
 It follows that for every Archimedean place $v$, $\Pi'(\lambda_v,\nu_v)$ is a packet containing a discrete series representation of $G'_v$, and thus by  \cite[\S 14.4]{rogawski1},  there will be a corresponding Arthur packet $\Pi(\lambda,\nu)$ of representations of $G(\A_F)$ such that at any Archimedean place $v\neq\iota$,  $\Pi(\lambda_v,\nu_v)$ is a singleton consisting of a finite-dimensional representation of the compact real group $G(F_v)={\rm U}(3)$.
In the notation of \cite[p.397]{rogawski2} the representations $\pi^+$ and $\pi^-$ have parameters
 $(r,s)=(1,-1)$ and   $(r,s)=(0,1)$, respectively,  and hence,   by the recipe on the same page,
 the highest weight of the associated finite-dimensional representation equals $(1,0,-1)$.
  Therefore at every   Archimedean $v\neq \iota$ we have  $\Pi(\lambda_v,\nu_v)=\{\1\}$.

So far we have established (i) and (ii), while (iii) is the content of  \cite[Theorem 1.1]{rogawski2}.

Conversely, any $\pi$ as in  (iv) is discrete, hence belongs to an Arthur packet $\Pi$ on $G$,
which can be transferred to an Arthur packet $\Pi'$ on $G'$ (see \cite[\S 14.4 and  Proposition 14.6.2]{rogawski1}).  By definition,  $\Pi_v=\Pi'_v$ at $v=\iota$ and at all the finite places $v$ (where, as noted earlier, $G_v=G'_v$). In particular  $\pi^+$ or $\pi^-$ belongs to $\Pi_\iota=\Pi'_\iota$, hence $\Pi'$ arises by endoscopy from $H$, that is to say equals $\Pi'(\lambda,\nu)$ for some  unitary  Hecke  character $\lambda$  of $M$ whose restriction to $F$ is $\omega$, and some  unitary  character of  $\A_M^1/M^1$ (see \cite[Theorem 13.3.6]{rogawski1}).  Since $\Pi_v=\{\1\}$  for all the Archimedean places $v\neq \iota$, by the above mentioned recipe $\Pi(\lambda_v,\nu_v)$ contains either $\pi^+$ or $\pi^-$, implying that $(\lambda,\nu)\in \Xi$
(see Definition \ref{CM-type}).
\end{proof}

\subsection{Irregularity of the connected components}

Let $ K$ be an open compact subgroup of $G(\A_{F,f})$.
Using   Theorem \ref{thm-rogawski}(iii) one can  transform    \eqref{q-formula-adelic} into the formula:
  \begin{equation}\label{q-formula-bis}
4q(Y_K)= \sum_{(\lambda,\nu)\in \Xi}
\sum_{ \pi \in \Pi(\lambda,\nu)} \dim(\pi_f^K)(W(\lambda\nu_M)+(-1)^{d-1+s(\pi)}).
 \end{equation}

We will now deduce a similar formula for the irregularity of the connected component of identity
$Y_{\Gamma}$ of $Y_K$, where $\Gamma = G(F)\cap K  G_\infty$.

Recall (see \eqref{components}) that $\pi_0(Y_K)\simeq  \A_M^1/M^1 \det(K)M_\infty^1$, and denote by $\widehat {\pi_0(Y_K)}$
its  (finite, abelian) group of characters.  Consider the free action of $\widehat {\pi_0(Y_K)}$ on  the set $\Xi$ given by $\left(\chi, (\lambda,\nu)\right) \mapsto (\lambda\chi_M^{-1},\nu\chi)$,  and denote by $ \Xi/\widehat{\pi_0(Y_K)}$ the quotient set. Since for any  $\pi \in \Pi(\lambda,\nu)$ and   any  $\chi\in \widehat {\pi_0(Y_K)}$  one has $\pi\otimes\chi\in \Pi(\lambda\chi_M^{-1},\nu\chi)$, the group
$\widehat{\pi_0(Y_K)}$ acts freely on the set of automorphic representations contributing to $q(Y_K)$.
Moreover this action preserves $\lambda\nu_M$, $s(\pi)$ and the dimension of $\pi_f^K$.
Hence, in the notations of \eqref{classical-adelic}, for any $1\leq i\leq h$, the image of the composite map
$$\rH^1(\Lie(G_\infty), K_\infty; \pi_\infty)\otimes \pi_f^K\to \rH^1(Y_K, \C) \to \rH^1(Y_{\Gamma_i}, \C), $$
 where the first map comes from  \eqref{decomposition-adelic} and the second  from the inclusion 
 $Y_{\Gamma_i}\subset Y_K$,
does not change when replacing $\pi$ by $\pi\otimes\chi$ for any $\chi\in\widehat {\pi_0(Y_K)}$.

It follows that   $q(Y_{\Gamma_i})\leq \frac{1}{h} q(Y_K)$ for  all $1\leq i\leq h$.
Since  $\sum_{i=1}^h q(Y_{\Gamma_i})= q(Y_K)$, we deduce that  $q(Y_{\Gamma_i})= \frac{1}{h} q(Y_K)$
for all $1\leq i\leq h$. This establishes  the   formula:
 \begin{equation}\label{q-formula-ter}
4q(Y_\Gamma)= \sum_{(\lambda,\nu)\in \Xi/\widehat{\pi_0(Y_K)}}
\sum_{ \pi \in \Pi(\lambda,\nu)} \dim(\pi_f^K)(W(\lambda\nu_M)+(-1)^{d-1+s(\pi)}).
 \end{equation}

This formula shows the importance of calculating $\dim(\pi_f^K)$ which, when 
$K$ is of the form $\prod_v K_v$ with $v$ running over all the finite places of $F$, 
can be reduced to a local computation of $\dim(\pi_v^{K_v})$. This will be taken up in the following section  
at  places $v$ where $K_v$ is not the hyperspecial maximal compact subgroup $K_v^0$. 

\subsection{Levels of induced representations}

Let $\fp$ be a prime of $F$ divisible by a unique prime $\fP$ of
$M$ and let $\F_q$ be the residue field $\fo/\fp$.
In this section we exhibit open compact subgroups $K_\fp$ of   $G(F_{\fp})$ for which $\pi_n(\lambda_\fp,\nu_\fp)$ (resp. $\pi_s(\lambda_\fp,\nu_\fp)$) admit a non-zero $K_\fp$-invariant subspace, and compute in some cases the exact dimension of this space.

For every  integer $m\geq 1$,  we define  the open compact subgroup $K(\fP^m)$ (resp.  $K_0(\fP^m)$, resp.  $K_1(\fP^m)$) of $G(F_{\fp})$ as the kernel (resp. the inverse image of upper triangular, resp.  upper  unipotent, matrices) of the composite homomorphism:
\begin{equation}
G(\fo_{\fp}) \hookrightarrow {\rm GL}(3, \fO_{\fP} ) \to {\rm GL}(3, \fO/\fP^m).
\end{equation}

\begin{lemma} \label{iwahori}
Let  $m\in \Z_{>0}$ be such that the character \eqref{torus-char} is trivial on $K_1(\fP^m)\cap T(F_\fp)$.  Then both $\pi_n(\lambda_\fp,\nu_\fp)$  and  $\pi_s(\lambda_\fp,\nu_\fp)$ have non-zero fixed vectors under  $K_1(\fP^m)$.
\end{lemma}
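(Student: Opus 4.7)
Both $\pi_n(\lambda_\fp,\nu_\fp)$ and $\pi_s(\lambda_\fp,\nu_\fp)$ are constituents of the parabolic induction $\pi := \Ind_{B(F_\fp)}^{G(F_\fp)}(\chi)$, with $\chi$ the character of $B(F_\fp)$ determined by~\eqref{torus-char}: $\pi_s$ is the unique irreducible subrepresentation and $\pi_n$ is the corresponding Langlands quotient. The plan is to produce two linearly independent $K_1(\fP^m)$-fixed vectors in $\pi$, indexed by the two Weyl cells in the double coset space $(B(F_\fp)\cap K^0_\fp)\backslash K^0_\fp / K_1(\fP^m)$, and then use the long intertwining operator to separate them into $\pi_s^{K_1(\fP^m)}$ and $\pi_n^{K_1(\fP^m)}$.

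Since $G$ is quasi-split at the non-split prime $\fp$, the Iwasawa decomposition $G(F_\fp) = B(F_\fp)\cdot K^0_\fp$ holds, and a vector $f\in\pi$ is determined by its restriction to $K^0_\fp = G(\fo_\fp)$, subject to $f(bk) = \chi(b) f(k)$ for $b\in B(F_\fp)\cap K^0_\fp$. Right-invariance under $K_1(\fP^m)$ makes this a function on the above double coset space, and prescribing it freely on a coset $(B\cap K^0_\fp)\,g\,K_1(\fP^m)$ is possible precisely when $\chi$ vanishes on $(B\cap K^0_\fp)\cap gK_1(\fP^m)g^{-1}$. For $g=1$ this intersection equals $N(\fo_\fp)\cdot\bigl(T(F_\fp)\cap K_1(\fP^m)\bigr)$, where $N$ is the unipotent radical of $B$; the character $\chi$ is trivial on the unipotent part by construction, and on the torus part by the hypothesis of the lemma, yielding a nonzero $f_1\in\pi^{K_1(\fP^m)}$. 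Taking $g$ to be a lift $w\in K^0_\fp$ of the nontrivial element of the relative Weyl group of $(G,T)$ over $F_\fp$ produces a second fixed vector $f_w$ by the same argument applied to the Weyl conjugate $w\chi$, since the congruence torus $T(F_\fp)\cap K_1(\fP^m)$ is stable under the Weyl action and $\chi^{-1}$ inherits triviality on it from $\chi$.

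To finish, I would consider the long intertwining operator $M_w \colon \pi \to \pi' := \Ind_{B(F_\fp)}^{G(F_\fp)}(w\chi)$; its kernel is $\pi_s$, and its image is the unique irreducible subrepresentation of $\pi'$, which again must be $\pi_n$. By the same Iwasawa-type argument applied to $\pi'$, the target carries $K_1(\fP^m)$-fixed vectors, so at least one of $M_w(f_1)$, $M_w(f_w)$ provides a nonzero element of $\pi_n^{K_1(\fP^m)}$; and a suitable linear combination of $f_1$ and $f_w$ lies in $\ker M_w = \pi_s^{K_1(\fP^m)}$.

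The principal obstacle is confirming that $f_1$ and $f_w$ are linearly independent and that $M_w$ acts nontrivially on the span they generate, which requires an explicit computation of the standard intertwining integral on these cell-supported vectors. This is a routine but non-trivial check in the rank-one representation theory of $G(F_\fp)$. An alternative avoiding the intertwining computation is to invoke Casselman's theorem to identify the $K_1(\fP^m)$-fixed vectors of each composition factor with the $(T(F_\fp)\cap K_1(\fP^m))$-fixed vectors in the corresponding character of the two-term Jacquet module $\pi_N$; the triviality of both $\chi$ and $w\chi$ on $T(F_\fp)\cap K_1(\fP^m)$ is then the only hypothesis needed, and the split between $\pi_s$ and $\pi_n$ is automatic.
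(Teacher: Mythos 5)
Your primary route has a genuine gap at exactly the point the lemma is about, and you flag it yourself. Producing the cell-supported vectors $f_1,f_w$ in the full induced representation $\pi=\Ind_{B(F_\fp)}^{G(F_\fp)}(\chi)$ only shows $\dim\pi^{K_1(\fP^m)}\geq 2$ (granting their independence). Since taking invariants under a compact open subgroup is an exact functor, this yields $\dim\pi_s(\lambda_\fp,\nu_\fp)^{K_1(\fP^m)}+\dim\pi_n(\lambda_\fp,\nu_\fp)^{K_1(\fP^m)}\geq 2$, which is perfectly compatible with one of the two constituents having no fixed vector at all. Your claim that ``at least one of $M_w(f_1)$, $M_w(f_w)$ is nonzero'' does not follow from the target $\pi'$ having $K_1(\fP^m)$-fixed vectors, since nothing forces $M_w$ to be nonzero on the span of $f_1,f_w$; one really must evaluate the intertwining integral on these vectors, and that computation is not supplied. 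So as written the first argument does not prove the lemma.

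The one-sentence alternative at the end, however, is precisely the paper's proof, and it does work. The paper computes the Jacquet modules of $\pi_s(\lambda_\fp,\nu_\fp)$ and $\pi_n(\lambda_\fp,\nu_\fp)$ explicitly: they are the one-dimensional characters $\chi\delta_B^{1/2}$ and $(w\chi)\delta_B^{1/2}$ of $T(F_\fp)$, and both are trivial on $K_1(\fP^m)\cap T(F_\fp)$ under the hypothesis, because the absolute-value factors are trivial on units and $T(F_\fp)\cap K_1(\fP^m)$ is stable under the Weyl involution. It then invokes the Iwahori factorization of $K_1(\fP^m)$ and the canonical surjection $V^{K_1(\fP^m)}\twoheadrightarrow J(V)^{K_1(\fP^m)\cap T(F_\fp)}$ from the proof of Casselman's Proposition 3.3.6, applied separately to $V=\pi_s(\lambda_\fp,\nu_\fp)$ and $V=\pi_n(\lambda_\fp,\nu_\fp)$; nonvanishing of the right-hand side forces nonvanishing of the left. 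Note that Casselman's statement gives a surjection rather than the identification you assert, but surjectivity is all that is needed. Replacing your first two paragraphs by this argument recovers the paper's proof; the split between $\pi_s$ and $\pi_n$ is then automatic, with no intertwining computation required.
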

\begin{proof}
Let $J$ denote the Jacquet functor sending admissible $G(F_{\fp})$-representations to
admissible $T(F_{\fp})$-representations. The Jacquet functor is exact and  its basic properties imply:
\begin{equation}\label{jacquet}
\begin{split}
J(\pi_s(\lambda_\fp,\nu_\fp)) :  (\bar\alpha,\beta,\alpha^{-1})&\mapsto \lambda_{\fp}(\bar\alpha)\nu_{\fp}(\beta)|\alpha|_{M_{\fp}}^{3/2}=\lambda_{\fp}(\bar\alpha)\nu_{\fp}(\beta)|\alpha|_{M_{\fp}}^{1/2}\cdot |\alpha|_{M_{\fp}},\\
J(\pi_n(\lambda_\fp,\nu_\fp)) : (\bar\alpha,\beta,\alpha^{-1})&\mapsto \lambda_{\fp}(\bar\alpha)\nu_p(\beta)|\alpha|_{M_{\fp}}^{1/2}=\lambda_{\fp}(\alpha^{-1})\nu_{\fp}(\beta)|\alpha|_{M_{\fp}}^{-1/2}\cdot |\alpha|_{M_{\fp}}.
\end{split}
\end{equation}

One knows that $K_1(\fP^m)$  admits an Iwahori decomposition:
$$K_1(\fP^m)= (K_1(\fP^m)\cap N(F_{\fp})) \cdot (K_1(\fP^m)\cap T(F_{\fp})) \cdot
(K_1(\fP^m)\cap \bar N(F_{\fp})),$$
where $N(F_{\fp})$ (resp. $\bar N(F_{\fp})$) denotes the unipotent of the standard (resp. opposite) Borel
containing $T(F_{\fp})$.
This is proved for the principal congruence subgroup $K(\fP^m)$ in \cite[Proposition 1.4.4]{casselman} and the extension to $K_1(\fP^m)$ is straightforward. Now by the proof of \cite[Proposition 3.3.6]{casselman}, given any admissible $G(F_{\fp})$-representation $V$, one has a canonical surjection:
$$V^{K_1(\fP^m)}\twoheadrightarrow J(V)^{K_1(\fP^m)\cap T(F_{\fp})}.$$
Since both characters in \eqref{jacquet} are trivial on $K_1(\fP^m)\cap T(F_{\fp})$, the claim follows.
\end{proof}

\begin{lemma} \label{inert}
Suppose that $\fp$ is inert in $M$ and that $(\lambda_\fp,\nu_\fp)$ is unramified.
Then the dimension of the $K_0(\fp)$-fixed subspace of  both  $\pi_s(\lambda_\fp,\nu_\fp)$ and $\pi_n(\lambda_\fp,\nu_\fp)$ is $1$. Moreover the dimension
of the $K(\fp)$-fixed subspace of  $\pi_s(\lambda_\fp,\nu_\fp)$ (resp. $\pi_n(\lambda_\fp,\nu_\fp)$) is $q^3$ (resp. $1$).
\end{lemma}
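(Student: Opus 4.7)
The plan is to reduce everything to a representation theory calculation for the finite quasi-split unitary group $G(\F_q) = U(2,1)(\F_q)$, where $\F_q = \fo_\fp/\fp$. Since $(\lambda_\fp, \nu_\fp)$ is unramified, the character in \eqref{torus-char} is trivial on $B(F_\fp) \cap K_\fp^0$, with $K_\fp^0 := G(\fo_\fp)$ the hyperspecial maximal compact. The Iwasawa decomposition $G(F_\fp) = B(F_\fp) K_\fp^0$ then identifies the restriction of the full induced principal series $I$ (whose constituents are $\pi_s$ and $\pi_n$) to $K_\fp^0$ with $\Ind_{B \cap K_\fp^0}^{K_\fp^0}(\1)$. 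Since $K(\fp)$ is normal in $K_\fp^0$ with quotient $G(\F_q)$, and $K(\fp) \subset B \cap K_\fp^0$, this yields $I^{K(\fp)} \cong \Ind_{B(\F_q)}^{G(\F_q)}(\1)$ as a $G(\F_q)$-module.

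For the quasi-split finite group $U(2,1)(\F_q)$ of relative rank one, this parabolic induction decomposes as $\1 \oplus \mathrm{St}$, the direct sum of the trivial and Steinberg representations, with $\dim \mathrm{St} = q^3$ equal to the order of the unipotent radical $N(\F_q)$. The total dimension $q^3 + 1 = [G(\F_q) : B(\F_q)]$ is consistent with the order formula $|U(2,1)(\F_q)| = q^3(q+1)^2(q-1)(q^3+1)$.

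Since the invariants functor under a compact open subgroup is exact on smooth representations, applying it to $0 \to \pi_s \to I \to \pi_n \to 0$ yields a short exact sequence of $G(\F_q)$-modules, which splits by semisimplicity. To identify the summands I would use three facts: (a) $\pi_n$ is the spherical Langlands quotient, so the image of the spherical vector of $I$ generates a trivial $G(\F_q)$-submodule inside $\pi_n^{K(\fp)}$; (b) $\pi_s^{K_\fp^0} = 0$, hence $\pi_s^{K(\fp)}$ contains no $G(\F_q)$-fixed vector; (c) $\pi_s^{K(\fp)} \neq 0$, because by Lemma \ref{iwahori} both constituents have a nonzero $K_1(\fP)$-fixed vector, and $K(\fp) \subset K_1(\fP)$ gives $\pi_s^{K_1(\fP)} \subset \pi_s^{K(\fp)}$. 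Combining these forces $\pi_n^{K(\fp)} \cong \1$ and $\pi_s^{K(\fp)} \cong \mathrm{St}$, of dimensions $1$ and $q^3$.

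Finally, $K_0(\fp)/K(\fp) \cong B(\F_q)$, so $\pi^{K_0(\fp)} = (\pi^{K(\fp)})^{B(\F_q)}$ for any smooth $\pi$. The trivial representation clearly has one-dimensional $B(\F_q)$-invariants, and the Steinberg representation of a finite group of Lie type satisfies $\dim \mathrm{St}^{B(\F_q)} = 1$ (this is classical, reflecting that $\mathrm{St}$ appears with multiplicity one in $\Ind_B^G \1$). Hence $\dim \pi_n^{K_0(\fp)} = \dim \pi_s^{K_0(\fp)} = 1$. The main obstacle is the identification in step (a) of the Langlands quotient $\pi_n$ with the spherical constituent; this rests on the general principle that the unique spherical vector in an unramified principal series projects to a nonzero spherical vector in the Langlands quotient, so that the non-spherical constituent $\pi_s$ is square-integrable.
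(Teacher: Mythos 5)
Your proposal is correct and follows essentially the same route as the paper: restrict the full induced representation to $K^0_\fp$ via the Iwasawa decomposition, identify its $K(\fp)$-invariants with $\Ind_{B(\F_q)}^{G(\F_q)}(\1)=\1\oplus\mathrm{St}$, use exactness of invariants together with $\pi_s^{K^0_\fp}=0$ to sort the two constituents, and then count dimensions (and $B(\F_q)$-invariants for the $K_0(\fp)$ statement). Your step (c), invoking Lemma \ref{iwahori} to guarantee $\pi_s^{K(\fp)}\neq 0$, makes explicit a nonvanishing that the paper leaves implicit, but this is a presentational difference, not a different argument.
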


\begin{proof} Since $(\lambda_\fp,\nu_\fp)$ is unramified,  restriction to the standard hyperspecial maximal compact subgroup $K^0_\fp$ of $G(F_{\fp})$ yields,  by Iwasawa decomposition
$G(F_\fp)=B(F_\fp)\cdot K^0_\fp$, the following exact sequence:
$$0\to \pi_s(\lambda_\fp,\nu_\fp)_{|K^0_\fp}\to \Ind_{B(F_{\fp}) \cap K^0_\fp}^{K^0_\fp}(\1) \to
\pi_n(\lambda_\fp,\nu_\fp)_{|K^0_\fp}\to 0. $$

The subspace of $K(\fp)$-invariant vectors in $\Ind_{B(F_{\fp}) \cap K^0_\fp}^{K^0_\fp}(\1)$ identifies naturally with the space of $\C$-valued functions on the set:
  $$
\left(B(F_{\fp})\cap K^0_\fp \right)\backslash K^0_\fp / K(\fp)  \simeq  B(\F_q)\backslash  G(\F_q),
$$
 on which $K^0_\fp/K(\fp) =  G(\F_q)$  acts by right translation.
 By  the Iwahori  decomposition,  since $G(\F_q)$ has rank $1$,
 the representation $\Ind_{B(\F_q)}^{G(\F_q)}(\1)$
    has exactly two irreducible constituents which are the trivial representation and the Steinberg representation, implying that both  $\pi_n(\lambda_\fp,\nu_\fp)^{K_0(\fp)}$  and  $\pi_s(\lambda_\fp,\nu_\fp)^{K_0(\fp)}$  are one-dimensional. Since $\pi_s(\lambda_\fp,\nu_\fp)^{K^0_\fp}=0$,  it follows that
 $\pi_n(\lambda_\fp,\nu_\fp)^{K(\fp)}$  (resp. $\pi_s(\lambda_\fp,\nu_\fp)^{K(\fp)}$ ) is isomorphic to the trivial (resp.  Steinberg) representation of $G(\F_q)$, hence its dimension equals $1$
(resp.  $q^3$).
\end{proof}

\subsection{Surfaces with positive irregularity}

The existence of  Hecke characters $\lambda$ of $M$ satisfying \eqref{weight1} goes back to  Chevalley and Weil. We will show that there are still such characters  if one  further imposes their restriction to $F$
to be $\omega$.

\begin{lemma}\label{CM-Hecke}
For any CM extension $M/F$ and any CM type $\Phi$ on $M$, there exist  Hecke characters $\lambda$ of $M$ whose restriction to $F$ equals $\omega$, such that $\displaystyle \lambda_\infty(z)=\prod_{v\in \Phi} \frac{\bar{z}_v}{|z_v|}$ for all $z\in M_\infty^\times$. 
\end{lemma}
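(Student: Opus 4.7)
The plan is a two-step construction: first produce a Hecke character $\lambda_0$ of $M$ with the prescribed infinity type, then twist it by a finite-order character so that the restriction to $\A_F^\times$ becomes $\omega$. The existence of $\lambda_0$ is a classical result of Chevalley and Weil on Hecke characters of CM fields with prescribed CM type, so I would simply invoke it.

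The critical observation is that $\lambda_{0,\infty}$ and $\omega_\infty$ already coincide on $F_\infty^\times$. Indeed, for $x\in F_\infty^\times\subset M_\infty^\times$ one has $\bar x_v=x_v\in\R$ at each $v\in\Phi$, and therefore
$$\lambda_{0,\infty}(x)=\prod_{v\in\Phi}\frac{\bar x_v}{|x_v|}=\prod_{v\mid\infty}\mathrm{sgn}(x_v)=\omega_\infty(x),$$
the last equality holding because every Archimedean place of the totally real field $F$ becomes complex in the totally imaginary extension $M$, so $\omega_v$ is the sign character on $F_v^\times=\R^\times$. Consequently $\phi:=\omega\cdot(\lambda_0|_{\A_F^\times})^{-1}$ is a Hecke character of $F$ with trivial infinity type, hence of finite order, and the problem reduces to extending $\phi$ to a finite-order Hecke character $\mu$ of $M$; then $\lambda:=\lambda_0\mu$ satisfies all the required conditions.

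For the extension step I would view $\phi$ as a continuous character of the compact profinite group $\A_F^\times/\overline{F^\times F_\infty^\times}$, and use the adelic identity $\A_F^\times\cap M^\times M_\infty^\times=F^\times F_\infty^\times$ (if $m\in M^\times$ equals $ab$ with $a\in\A_F^\times$ and $b\in M_\infty^\times$, then $m_f=a_f\in\A_{F,f}^\times$ forces $m\in M^\times\cap\A_{F,f}^\times=F^\times$, whence $b\in F_\infty^\times$) to deduce that the natural map to the corresponding profinite quotient for $M$ is injective, and hence — being a continuous injection between compact Hausdorff groups — a closed topological embedding. Pontryagin duality then supplies a continuous extension $\mu$ of $\phi$, which is automatically of finite order since the target is profinite. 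The main ingredient external to this argument is Weil's existence theorem for $\lambda_0$; everything else reduces to the archimedean matching above, the easy adelic intersection identity, and a standard application of Pontryagin duality for closed subgroups of compact abelian groups.
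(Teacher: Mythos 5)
Your overall architecture is reasonable and, after unwinding, runs parallel to the paper's: one produces a character with the correct behaviour on the closed subgroup $\A_F^\times M_\infty^\times\subset\A_M^\times$ and then extends it to $\A_M^\times/M^\times$ by duality. (The paper does this in one pass rather than first taking $\lambda_0$ and then twisting by a finite-order $\mu$, but that difference is cosmetic.) Your archimedean matching of $\lambda_{0,\infty}$ with $\omega_\infty$ and the final Pontryagin-duality step are fine.

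The gap is the injectivity claim. For the map $\A_F^\times/\overline{F^\times F_\infty^\times}\to\A_M^\times/\overline{M^\times M_\infty^\times}$ to be injective one needs $\A_F^\times\cap\overline{M^\times M_\infty^\times}\subseteq\overline{F^\times F_\infty^\times}$, and intersection does not commute with closure: your identity $\A_F^\times\cap M^\times M_\infty^\times=F^\times F_\infty^\times$ only gives the \emph{reverse} inclusion after taking closures. Since $\overline{M^\times M_\infty^\times}=\bigcap_U M^\times U M_\infty^\times$ over open compact $U\subseteq\A_{M,f}^\times$, an idele $x\in\A_F^\times$ in this closure is, for each $U$, of the form $m_U u_U m_{\infty,U}$ with a global element $m_U\in M^\times\cap\A_F^\times U M_\infty^\times$ depending on $U$, and to conclude $x\in\overline{F^\times F_\infty^\times}$ one must show that $M^\times\cap\A_F^\times U M_\infty^\times=F^\times$ for a cofinal family of $U$. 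This is exactly the paper's identity \eqref{intersection}, and it is not formal: for $x$ in that intersection one only gets $\bar x/x\in\fO^\times\cap UM_\infty^1$, and the obstruction comes from global units of $M$ that are congruence-close to $1$ without lying in $\fo^{\times 2}$. The paper removes this obstruction by invoking Chevalley's theorem to choose $U$ with $U\cap\fO^\times\subseteq\fo^{\times 2}$ (and then uses $F^{\times 2}\cap M^1=\{1\}$); this is the essential arithmetic input and it is absent from your argument. Note also that, dually, the injectivity you assert is \emph{equivalent} to the statement that every totally even finite-order Hecke character of $F$ extends to a finite-order Hecke character of $M$ trivial on $M_\infty^\times$ — essentially the content of the lemma — so it cannot be obtained from an "easy adelic intersection identity" alone.
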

\begin{proof}
Since $M$ is totally imaginary, $\lambda_\infty$ and $\omega$ agree on
$F_\infty^\times$, hence there is a character $\lambda_0$ of $\A_F^\times M_\infty^\times$ extending both.

We show now that $\lambda_0$ can be extended to a Hecke character $\lambda$ of $M$, which will obviously satisfy the assumptions of the lemma. Since $M/F$ is totally imaginary,  $\fo^{\times 2}$ has finite index  in $\fO^{\times}$. By \cite[Th\'eor\`eme 1]{chevalley} there exists an open compact subgroup $U$ of $\A_{M,f}^\times$ such that $U\cap \fO^{\times}\subset \fo^{\times 2}$. We may, and we will,  assume that $U$ is contained in the congruence subgroup whose level is the relative different of $M/F$ and by replacing $U$ by $U\cap\bar U$ we can further assume that $U=\bar U$. 
Since the Artin conductor of $\omega$ is the relative discriminant of $M/F$, it follows that $\omega$ is
trivial on $U\cap \A_{F,f}^\times$.  Hence one can extend $\lambda_0$ to a 
character of $\A_F^\times U M_\infty^\times$ by letting it be trivial on $U$. 

Suppose we knew that
\begin{equation}\label{intersection}
M^{\times}\cap \A_F^\times U M_\infty^\times=F^\times.
\end{equation}
Then there is a unique character of $M^{\times} \A_F^\times U M_\infty^\times$ extending both
$\lambda_0$ and the trivial character of $M^{\times} U$. Since $\A_{M}^{\times}/M^{\times}\A_F^\times U M_\infty^\times$ is a finite abelian (idele class) group, the  character above can be further extended to
a   character $\lambda$ of $\A_{M}^{\times}/M^{\times}$, and  any such extension has the desired properties.

 It remains to prove \eqref{intersection}. Let $x\in M^{\times}\cap \A_F^\times U M_\infty^\times$.
 Then
 $$\bar x/x\in M^1\cap U M_\infty^1= \fO^{\times} \cap U M_\infty^1\subset \fo^{\times 2}.$$
 Since  $F^{\times 2} \cap M^1=\{1\}$, we have  $x=\bar x\in F^\times$.
\end{proof}

\begin{prop}\label{positivity} Fix any  Hecke character $\lambda$ of $M$ 
satisfying \eqref{weight1} whose restriction to $F$ is $\omega$.
 Let   $\fp$ be a prime of $F$ which  splits in   $M$ and is relatively prime to the conductor $\fC$ of 
 $\lambda$.  If $W(\lambda^{3})=(-1)^{d}$ we choose a prime $\fq$  of $F$ which does not split in  $M$; if not, we take $\fq=\fo$.
Then $q(Y_{\Gamma_1(\fC)\cap \Gamma_0(\fp\fq)})>2$.

\end{prop}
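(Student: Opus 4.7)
The plan is to apply the explicit irregularity formula \eqref{q-formula-ter} with two distinct orbits in $\Xi/\widehat{\pi_0(Y_K)}$, each contributing via an automorphic representation whose fixed subspace at the split prime $\fp$ has dimension $3$. The first step is to construct a compatible $\nu$: I would take $\nu := \lambda^{-1}|_{\A_M^1}$. Because $\lambda|_{\A_F^\times} = \omega$ and $\omega$ is trivial on norms, $\lambda(z)\lambda(\bar z) = \omega(z\bar z) = 1$, so $\lambda(\bar z) = \lambda(z)^{-1}$. For $z \in M_\infty^1$ one has $\bar z = z^{-1}$, so $\nu_\infty(z) = \lambda_\infty(z)^{-1}$ simplifies to $\prod_{v \in \Phi} z_v$, confirming $(\lambda, \nu) \in \Xi$. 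The same identity gives $\nu_M(z) = \lambda^{-1}(\bar z/z) = \lambda(z)/\lambda(\bar z) = \lambda(z)^2$, so $\lambda\nu_M = \lambda^3$ and the root number appearing in Rogawski's automorphic criterion is precisely $W(\lambda^3)$.

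Next, by Theorem \ref{thm-rogawski}(iii), a representation $\pi \in \Pi(\lambda,\nu)$ is automorphic with multiplicity $1$ precisely when $W(\lambda^3) = (-1)^{d-1+s(\pi)}$. The choice of $\fq$ in the proposition aligns the parities: when $W(\lambda^3) = (-1)^{d-1}$ I set $\fq = \fo$ and pick $\pi_v = \pi_n$ at every finite place (so $s(\pi) = 0$); otherwise I pick $\fq$ non-split and choose $\pi_\fq = \pi_s$, $\pi_v = \pi_n$ elsewhere (so $s(\pi) = 1$). For the level $K$ corresponding to $\Gamma_1(\fC)\cap \Gamma_0(\fp\fq)$ I would then compute $\dim(\pi_f^K)$ place by place: the spherical line at hyperspecial places; dimension $1$ at $\fq$ by Lemma \ref{inert}; nontrivial $K_1(\fC_v)$-invariants at primes $v \mid \fC$ by (variants of) Lemma \ref{iwahori}; and at $\fp$, using that the inducing character in \eqref{torus-char} is trivial on $P(F_\fp)\cap K^0_\fp$, the Iwasawa decomposition gives $\pi_\fp|_{K^0_\fp} \simeq \Ind_{P\cap K^0_\fp}^{K^0_\fp}(\1)$, whose $K_0(\fp)$-invariants are the $\C$-valued functions on the Bruhat double coset space $B(\F_q)\backslash \GL(3,\F_q)/P(\F_q)$, of cardinality $|W/W_P| = 3$. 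Hence $\dim(\pi_f^K) \geq 3$ and this one orbit contributes at least $6$ to the right-hand side of \eqref{q-formula-ter}, giving $q(Y_\Gamma) \geq 2$ but not yet the strict inequality.

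To upgrade $q(Y_\Gamma) \geq 2$ to $q(Y_\Gamma) > 2$, I would invoke the complex-conjugate pair $(\bar\lambda, \bar\nu)$: it also lies in $\Xi$ but is attached to the conjugate CM type $\bar\Phi$, so its Arthur packet has archimedean component $\pi^-$ at $\iota$ rather than $\pi^+$. Because every $\chi \in \widehat{\pi_0(Y_K)}$ is trivial at infinity, the $\widehat{\pi_0(Y_K)}$-action preserves archimedean components; hence the orbits of $(\lambda,\nu)$ and $(\bar\lambda,\bar\nu)$ are distinct and contribute independently to \eqref{q-formula-ter}. The identical local analysis yields at least $6$ for the second orbit, whence $4q(Y_\Gamma) \geq 12$ and $q(Y_\Gamma) > 2$.

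The main obstacle I anticipate is the local computation at $\fp$: one must exploit the specific unramified form of the inducing character in \eqref{torus-char} to show that the restriction of $\pi_\fp$ to the maximal compact is genuinely the permutation representation $\Ind_{P\cap K^0_\fp}^{K^0_\fp}(\1)$, so that the Bruhat count produces the exact Iwahori-invariant dimension rather than merely a lower bound coming from a possibly reducible induction. The auxiliary $\fq$ plays the purely combinatorial role of flipping the parity of $s(\pi)$ to accommodate either sign of $W(\lambda^3)$, and the distinctness of the two orbits rests on the easy observation that the $\widehat{\pi_0(Y_K)}$-action cannot swap $\pi^+$ and $\pi^-$ at the archimedean place $\iota$.
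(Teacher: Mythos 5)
Your proposal is correct and follows essentially the same route as the paper: the same choice $\nu=\lambda^{-1}|_{M^1}$ (so that $\lambda\nu_M=\lambda^3$), the same use of $\fq$ to flip the parity of $s(\pi)$ in Rogawski's multiplicity criterion, and the same local invariant computations (Lemmas \ref{iwahori} and \ref{inert} away from $\fp$, and the count $|W_P\backslash W|=3$ of Iwahori-invariants at the split prime $\fp$). The only difference is that you explicitly adjoin the conjugate orbit of $(\bar\lambda,\bar\nu)$ to account for the factor $4$ in \eqref{q-formula-ter}, a bookkeeping step the paper leaves implicit when it passes from $\dim(\pi_\fp^{K_0(\fp)})\geq 3$ directly to $q(Y_\Gamma)\geq 3$; your version is a legitimate (arguably more scrupulous) way to close that constant.
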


\begin{proof}
Let $K= K_1(\fC)\cap K_0(\fp\fq)$, so that  $\Gamma=G(F)\cap K K_\infty$.
Let $\Pi(\lambda,\lambda^{-1}_{|M^1})$ be the global Arthur packet on $G$ associated to
$\lambda$. Let  $\pi=\otimes_v \pi_v\in \Pi(\lambda,\lambda^{-1}_{|M^1})$ be such that $\pi_\iota=\pi^+$, $\pi_v=\1$ for every infinite place $v\neq \iota$, $\pi_v=\pi_{n,v}$ for every finite $v\neq \fq$, and finally if $W(\lambda^{3})=(-1)^{d}$ then $\pi_{\fq}=\pi_{s,\fq}$.
By Theorem \ref{thm-rogawski}(iii), $\pi$ is automorphic, and by
 Lemma \ref{iwahori}, we have $\pi_{n,v}^{K_{v}}\neq 0$ for all finite places $v\neq \fp, \fq$.
Moreover, if $\fq\neq \fo$, then $\pi_{s,\fq}^{K_\fq}\neq 0$ by Lemma \ref{iwahori}
(resp. Lemma  \ref{inert} ) 
 if $\fq$ divides (resp.  does not divide) $\fC$.
Finally  \eqref{split}  implies that $\dim(\pi_{\fp}^{K_0(\fp)}) \geq 3$, hence $q(Y_{\Gamma})\geq 3$
by \eqref{q-formula-ter}  as claimed. \end{proof}

\begin{rem}
Since restriction of $\lambda$ to $\A_F^\times$ equals $\omega$, its conductor $\fC$ is divisible by the different of $M/F$.
Hence, unless $M/F$ is unramified everywhere, one might take as $\fq$ a place where
$M/F$ is ramified and  Proposition \ref{positivity} applies then to $K= K_1(\fC)\cap K_0(\fp)$. 
Given a totally real number field $F$, there exists a  totally imaginary  quadratic extension $M/F$ unramified everywhere if and only if all the units in $F$ have norm 1.
\end{rem}

For the rest of this section we assume that  $F=\Q$, so that $G$ is quasi-split.

\begin{prop} \label{low}
Any $\Gamma$ as in Theorem \ref{picard-open} is neat and $q(Y_\Gamma)>2$.
\end{prop}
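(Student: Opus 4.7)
I will separately establish that $\Gamma$ is neat and that $q(Y_\Gamma)>2$.

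\textbf{Neatness.} The strategy is to exhibit $\Gamma$ as a subgroup of a group already known to be neat. In case (i) with $\Gamma=\Gamma_1(\fD)$, each of the three definitions of $\fD$ gives $\fD\subset\sqrt{-D}\fO$, so $\Gamma\subset\Gamma_1(\sqrt{-D}\fO)$; the exclusion list of seventeen discriminants for this sub-case contains $\{3,4,7,8,24\}$, so Lemma \ref{neatbis} applies. For $\Gamma=\Gamma(\fD)$ or $\Gamma=\Gamma(\fD^2)$, one verifies by direct computation that $\fD\cap\Z=N\Z$ (respectively $\fD^2\cap\Z=N\Z$) for an integer $N>2$: for instance $N=9$ for $D=3$, $N=4$ for $D=4$, $N=D$ for the remaining $\fD^2$ entries, and $N\geq 8$ for the $\Gamma(\fD)$ entries; hence $\Gamma\subset\Gamma(N)$ is neat by Lemma \ref{neat}. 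Case (ii) is immediate from $\Gamma\subset\Gamma(N)$ with $N>2$ prime.

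\textbf{Irregularity.} The argument follows the template of Proposition \ref{positivity}, with the important caveat that the groups in Theorem \ref{picard-open}(i) impose no Iwahori condition at a split prime, so Proposition \ref{positivity} is not directly applicable. One instead applies formula \eqref{q-formula-ter} with $K$ the adelic level of $\Gamma$, noting that for $F=\Q$ we have $d=1$. By Lemma \ref{CM-Hecke} we construct a Hecke character $\lambda$ of $M$ satisfying \eqref{weight1} with restriction $\omega$ to $\Q$ and of conductor $\fC$ dividing the level of $\Gamma$, and we count the contributions from the Arthur packet $\Pi(\lambda,\lambda^{-1}|_{M^1})$ and its twists by $\widehat{\pi_0(Y_K)}$. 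In case (ii), the inert prime $N$ supplies the main contribution: Lemma \ref{inert} yields $\dim(\pi_N^{K(N)})=N^3\geq 27$ for the Steinberg-type local component $\pi_s(\lambda_N,\nu_N)$, so a single automorphic $\pi$ with $\pi_\iota=\pi^+$, $\pi_v=\pi_n$ at the remaining finite places, and $\pi_N=\pi_s$ already forces $q(Y_\Gamma)>2$ provided the global root-number condition of Theorem \ref{thm-rogawski}(iii) is satisfied. In case (i), without any auxiliary split or inert prime, the contributions must accumulate from the ramified primes dividing the level: at each such prime Lemma \ref{iwahori} gives a nonzero space of invariants, and one exploits the multiple choices of $\pi_v\in\{\pi_n,\pi_s\}$ at those places together with the twin $\pi_\iota\in\{\pi^+,\pi^-\}$ contributions and the action of $\widehat{\pi_0(Y_K)}$ on $\Xi$, verifying sub-case by sub-case that the total exceeds the required threshold of $8$.

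\textbf{Main obstacle.} The crux is the case-by-case construction of an admissible Hecke character $\lambda$ with sufficiently small conductor and the correct sign of $W(\lambda\nu_M)$, a delicate problem whose solution is reflected in the tiered choice of level in Theorem \ref{picard-open}(i). For the class-number-one discriminants $D\in\{3,4,7,11,19,43,67,163\}$ no such $\lambda$ exists with conductor as small as $\sqrt{-D}\fO$ --- the obstruction arising from the interaction between the constraint $\lambda|_{\A_\Q^\times}=\omega$, the infinity type, and the units of $\fO$ --- so one must work at conductor $\fD^2$; for the nine discriminants $\{8,15,20,23,24,31,39,47,71\}$ the character exists at conductor $\fD$ but the resulting $\Gamma_1(\fD)$-invariants in the packet are insufficient, forcing the refinement to $\Gamma(\fD)$. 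Verifying in each remaining sub-case that the contributions exceed the required lower bound, and that the sign of $W(\lambda^3)$ does not obstruct the construction (or else choosing the appropriate ramified $\fq$), will form the technical heart of the formal proof.
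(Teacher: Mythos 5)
Your neatness argument is correct and is essentially the paper's (Lemma \ref{neatbis} for the $\Gamma_1(\fD)$ sub-case, and Lemma \ref{neat} via $\fD\cap\Z=N\Z$ with $N>2$ for the principal congruence sub-cases), and your treatment of case (ii) via the inert prime $N$ and Lemma \ref{inert} is the argument the paper gives. You are also right that Proposition \ref{positivity} does not apply directly because no auxiliary split prime is available.

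The genuine gap is in case (i), and your ``main obstacle'' paragraph misdiagnoses it. First, your claim that no admissible $\lambda$ of conductor as small as $\sqrt{-D}\fO$ exists for the class-number-one discriminants is false: the paper produces characters of conductor exactly $\fD$ for \emph{every} $D$ (Rohrlich's canonical characters for $D\neq 3$ odd or $8\mid D$, Yang's simplest characters for the remaining even $D$, and CM elliptic curves of conductor $27$ and $32$ for $D=3,4$). Second, the mechanism you propose for accumulating contributions cannot reach $q>2$ for the class-number-one fields of prime discriminant ($D=7,11,19,43,67,163$): there is a single ramified prime, and switching $\pi_p$ between $\pi_n$ and $\pi_s$ there flips the parity of $s(\pi)$, so exactly one of the two choices satisfies Rogawski's multiplicity condition; moreover the two choices of archimedean member are already absorbed into the factor $4$ on the left of \eqref{q-formula-ter}, so they buy nothing extra. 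Your count therefore yields only $q\geq 1$ in these cases. The missing idea --- and the real reason the theorem deepens the level to $\fD^2$ --- is to twist $\lambda$ by the $D$ finite-order Hecke characters $\xi$ lifted from $(1+\sqrt{-D}\fO)/(1+D\fO)\simeq\Z/D\Z$ with trivial restriction to $\Q$, producing $D$ distinct classes $(\lambda\xi,\lambda^{-1}_{|M^1})$ in $\Xi/\widehat{\pi_0(Y_K)}$, each contributing at least $1$ to $q(Y_{\Gamma(\fD^2)})$ after adjusting the local component at the ramified prime to match the sign of $W(\lambda^3)$. Analogously, one needs an explicit supply of at least three admissible characters of conductor dividing $8$ (resp.\ $2\sqrt{-8}$) for $D=4$ (resp.\ $D=8$), class-group characters give $h$ classes when $h\geq 3$, and for $h=2$ one combines the two class characters with the two admissible choices of $(\pi_p,\pi_q)$ at the two ramified primes. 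Without these inputs the ``sub-case by sub-case'' verification you defer would fail rather than merely be tedious.
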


\begin{proof}
There exists an open compact subgroup $ K$ of $ G(\A_{\Q,f})$ such that
$\Gamma=G(\Q)\cap  K G(\R) $.

We claim that there exists a  Hecke character  $\lambda$ of $M$ of conductor $\fD$
 satisfying \eqref{weight1}, whose restriction to $F$ equals $\omega$. 
If  $D\neq 3$ is odd or if $8$ divides $D$ such characters, called canonical,  are proved to exists by 
Rohrlich \cite{rohrlich}. If $D>4$ is even but not divisible by $8$, then by Yang (see \cite[p.88]{yang})  there are  such characters, called the {\it simplest}. Finally for $D=3$ (resp. D=4) the claim follows from the existence of a CM elliptic curve over $\Q$ of conductor $27$ (resp. $32$).

By definition,  $(\lambda,\lambda^{-1}_{|M^1})\in \Xi$ and is trivial on
$K_1(\fD)\cap T(\A_{\Q,f})$.  Then Lemma \ref{iwahori} implies that:
\begin{equation}\label{inv-non-zero}
\pi_f^{K_1(\fD)}\neq 0\text { , for all } \pi \in \Pi(\lambda,\lambda^{-1}_{|M^1}).
\end{equation}

\medskip
In case (ii) of Theorem \ref{picard-open} where $\Gamma=\Gamma(N)\cap \Gamma_1(\fD)$
we fix a prime $p$ dividing $D$ and   $\pi=\otimes_v \pi_v\in  \Pi(\lambda,\lambda^{-1}_{|M^1})$ such that
$\pi_v=\pi_n(\lambda_v,\lambda^{-1}_{|M_v^1})$ for all  $v\neq p,N$,  $\pi_N=\pi_s(\lambda_N,\lambda^{-1}_{|M_N^1})$ and
 $$\pi_p=\begin{cases} \pi_n(\lambda_p,\lambda^{-1}_{|M_p^1})\text{ , if } W(\lambda^3)=-1,\\
  \pi_s(\lambda_p,\lambda^{-1}_{|M_p^1})\text{ , if } W(\lambda^3)=1.\end{cases}$$
Since $\Gamma(N)$ is neat by Lemma \ref{neat}, we can apply \eqref{q-formula-ter} which, when combined with
Lemma \ref{inert}, yields
$$q(Y_{\Gamma(N)\cap\Gamma_1(\fD)})\geq \dim(\pi_N^{K(N)})\geq N^3\geq 3.$$

We now turn  to case (i) and suppose first that  $M$ has  class  number $h \geq 3$.
 For any class character $\xi$ one has $(\lambda\xi,\lambda^{-1}_{|M^1})\in \Xi$
 giving $h$ pairwise distinct elements in $\Xi/\widehat {\pi_0(Y_{K_1(\fD)})}$.
  Fix a prime $p$ dividing $D$ and consider
  $\pi=\otimes_v \pi_v\in  \Pi(\lambda\xi,\lambda^{-1}_{|M^1})$ such that
$\pi_v=\pi_n(\lambda_v\xi_v,\lambda^{-1}_{|M_v^1})$ for all  $v\neq p$ and
 $$\pi_p=\begin{cases} \pi_n(\lambda_p\xi_p,\lambda^{-1}_{|M_p^1})\text{ , if } W(\lambda^3)=1,\\
  \pi_s(\lambda_p\xi_p,\lambda^{-1}_{|M_p^1})\text{ , if } W(\lambda^3)=-1.\end{cases}$$
Since $\Gamma_1(\fD)$ is neat by Lemma \ref{neatbis}, one can apply \eqref{q-formula-ter} which combined with
 \eqref{inv-non-zero} yields  $q(Y_{\Gamma_1(\fD)})\geq h\geq 3$.

If $M$ is one of the $18$ imaginary quadratic fields of class number $2$, then its
fundamental discriminant $D$ has (exactly) two distinct prime divisors $p<q$.
For each  character $\lambda$ on $M$ as above,
consider  $\pi\in\Pi(\lambda,\lambda^{-1}_{|M^1})$
such that $\pi_v=\pi_n(\lambda_v,\lambda^{-1}_{|M_v^1})$ for all  $v\neq p,q$ and
$$(\pi_p,\pi_q)=\begin{cases} (\pi_n(\lambda_p,\lambda^{-1}_{|M_p^1}), \pi_n(\lambda_q,\lambda^{-1}_{|M_q^1}))
\text{ or } (\pi_s(\lambda_p,\lambda^{-1}_{|M_p^1}), \pi_s(\lambda_q,\lambda^{-1}_{|M_q^1}))
\text{ , if } W(\lambda^3)=1,\\
(\pi_n(\lambda_p,\lambda^{-1}_{|M_p^1}), \pi_s(\lambda_q,\lambda^{-1}_{|M_q^1}))
\text{ or } (\pi_s(\lambda_p,\lambda^{-1}_{|M_p^1}), \pi_n(\lambda_q,\lambda^{-1}_{|M_q^1}))
\text{ , if } W(\lambda^3)=-1.\end{cases}$$
If $D\neq 24$ then $\Gamma_1(\fD)$ is neat by Lemma \ref{neatbis} and
\eqref{q-formula-ter} implies that $q(Y_{\Gamma_1(\fD)})\geq 2\cdot 2 =4$.
If $D=24$ then $\Gamma(\fD)$ is neat by Lemma  \ref{neat}, since $4$ divides $\fD$, and again
$q(Y_{\Gamma(\fD)})\geq 4$.

Finally,  we consider  the nine imaginary quadratic fields of class number $1$.

For $D\in  \{7,11,19, 43, 67, 163\}$ there is a unique  character 
$\lambda$ as in the beginning of the proof. 
Any character of $(1+\sqrt{-D}\fO/ 1+D\fO)\simeq \Z/D\Z$ lifts to a finite order Hecke character $\xi$ of $M$ with trivial restriction to $\Q$, hence $(\lambda\xi, \lambda^{-1}_{|M^1})\in \Xi$.
Let  $\pi=\otimes_v \pi_v\in  \Pi(\lambda\xi, \lambda^{-1}_{|M^1})$ be such that
$\pi_v=\pi_n(\lambda_v\xi_v,\lambda^{-1}_{|M_v^1})$ for all  $v\neq D$ and
 $$\pi_D=\begin{cases} \pi_n(\lambda_D\xi_D,\lambda^{-1}_{|M_D^1})\text{ , if } W(\lambda^3)=1,\\
  \pi_s(\lambda_D\xi_D,\lambda^{-1}_{|M_D^1})\text{ , if } W(\lambda^3)=-1.\end{cases}$$
Since $\Gamma(D)$ is neat by Lemma \ref{neat}, by \eqref{q-formula-ter} we get
$q(Y_{\Gamma(D)})\geq D\cdot \dim(\pi_D^{K(D)})\geq D$.

For $D=3$ the same argument with $D^2$ instead of $D$, implies that
$q(Y_{\Gamma(9\fO)})\geq 3$.

For $D=4$ (resp. $D=8$) the group $\Gamma(8\fO)$ (resp. $\Gamma(2\sqrt{-8}\fO)$) is neat by Lemma  \ref{neat}
and it is an  exercise on idele class groups to show that there are at least three Hecke characters of $M$ 
satisfying \eqref{weight1} whose restriction to $\Q$ is  $\omega$,
and whose conductor divides $8$ (resp. $2\sqrt{-8}$).
It follows then from \eqref{q-formula-ter} and \eqref{inv-non-zero} that for $D=4$ (resp. $D=8$) one has
$q(Y_{\Gamma(8\fO)})\geq 3$ (resp. $q(Y_{\Gamma(2\sqrt{-8}\fO)})\geq 3$).
 \end{proof}

\begin{rem}
The computation of the smallest level $K$ for which there exists an  automorphic representation $\pi\in\Pi(\lambda, \nu)$ such that $\pi_f^K\neq 0$ is analyzed in detail in \cite{dimitrov-ramakrishnan}. In particular, if $\lambda$ is a canonical character, we check that the level subgroup at any  $p$ dividing  $D$ is precisely the one conjectured by B.~Gross, namely the index $2$ subgroup of the maximal parahoric subgroup with reductive quotient $\mathrm{PGL}(2)$.
\end{rem}

\section{The Albanese map and Mordellicity}\label{mordel}

A major ingredient in the proof of our theorems is the Mordell-Lang conjecture for abelian varieties in characteristic zero, established by
Faltings  \cite{faltings} using some  earlier work of himself \cite{faltings-annals} and Vojta \cite{vojta} (see  Mazur's detailed account \cite{mazur}).

\begin{thm}[Mordell-Lang conjecture : theorem of Faltings]\label{thmF}
Suppose $A$ is an abelian variety over $\C$ and  $Z \subset A$ a closed subvariety. Then for any finitely generated field extension $k$  of $\Q$
over which $Z\subset A$ is defined, the set $Z(k)$ is contained in a  union of  finitely many translates of abelian subvarieties of  $A$,  each of which is defined over $k$ and  contained in $Z$.
\end{thm}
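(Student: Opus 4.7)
The plan is to follow Faltings' proof via Vojta's method, since there is essentially no way around the deep Diophantine input. First I would make two reductions. By Noetherian induction on $Z$, it suffices to show that, unless $Z$ contains a translate of a positive-dimensional abelian subvariety of $A$ over $k$, the set $Z(k)$ is \emph{not} Zariski dense in $Z$. Indeed, stripping off the union of all such translates inside $Z$ leaves a closed subvariety $Z'$, and iterating the non-density statement on the irreducible components of $Z'$ terminates by Noetherianity. The second reduction is to spread out: after enlarging $k$ if necessary to a finitely generated field, choose a finitely generated $\Z$-algebra $R\subset k$ and smooth models of $A$ and $Z$ over $R$; by specializing at a closed point one can usually reduce the problem over $k$ to the number-field case, modulo a standard Mordell--Weil-type argument that controls the behavior of $Z(k)$ under specialization.

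The core step, assuming $k$ is a number field and $Z$ contains no translate of a positive-dimensional abelian subvariety of $A$, is to bound the set of $k$-points by producing enough height inequalities on $Z^m = Z\times\cdots\times Z$ inside $A^m$. Concretely, I would fix a symmetric ample line bundle $L$ on $A$ and use the Néron--Tate height $\hat h_L$ to put a positive-definite quadratic form on $A(\bar k)\otimes \R$. The key is Vojta's inequality: for any $\varepsilon>0$ and any sufficiently "spread out" $m$-tuple of points of large height in $Z(k)$, one gets a nontrivial linear relation between the heights, which contradicts (for $m$ large) Mumford's gap principle that forces such points to be sparsely distributed on the sphere in $A(k)\otimes\R$. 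Combining the gap principle with Vojta's inequality yields a uniform bound on the number of $k$-points of bounded height, and, together with the fact that $Z(k)$ is contained in a finitely generated subgroup after replacing $A$ by the subgroup generated by $Z(k)$, produces only finitely many points outside a proper closed subset, contradicting Zariski density.

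To prove the Vojta inequality one constructs, via Arakelov intersection theory on $Z\times Z$ (more generally on $Z^m$), a small global section $s$ of a suitable twist of the Poincaré bundle pulled back from $A\times A$ that vanishes to prescribed high orders along various diagonals. The construction rests on a Siegel--Bombieri--Vaaler-style lemma for hermitian lattices, where one needs precise estimates on the arithmetic degrees of the direct image sheaves; these are supplied by Faltings' arithmetic Riemann--Roch-type inequalities. One then evaluates $s$ at a well-chosen tuple of rational points of very different heights; an upper bound from the smallness of $s$, combined with a lower bound from the order of vanishing and the product structure, gives the desired inequality.

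The main obstacle, and the novel piece contributed by Faltings beyond Vojta's original curve argument, is the \emph{product theorem}: a Roth-type zero-estimate that controls where $s$ can vanish on a product $Z_1\times\cdots\times Z_m$ in terms of the vanishing along individual factors. This is the step where one must genuinely go beyond classical Diophantine approximation, and it is what allows one to handle subvarieties of abelian varieties rather than just curves. Given this, the structural Mordell--Lang statement about finitely many translates of abelian subvarieties follows as above; in our applications only the qualitative "not Zariski dense" form will be invoked, via Proposition \ref{positivity} and the Albanese map discussed in Section \ref{mordel}.
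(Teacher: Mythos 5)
The paper does not prove this statement at all: it is quoted as a black box from Faltings \cite{faltings}, with \cite{faltings-annals}, \cite{vojta} and Mazur's survey \cite{mazur} cited for the proof. So there is no proof in the paper to compare yours against; the only real question is whether your sketch would stand on its own, and it would not. What you have written is an accurate roadmap of Faltings' published argument, but every load-bearing step in it is itself a deep theorem that you only name: Vojta's height inequality on $Z^m$ (whose proof requires the arithmetic Riemann--Roch estimates and the Siegel-type lemma you allude to), Faltings' product theorem, Mumford's gap principle for subvarieties, and the Mordell--Weil/Lang--N\'eron theorem needed to place $Z(k)$ in a finitely generated subgroup. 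Reproducing these is the content of \cite{faltings-annals, faltings, vojta}, and a four-paragraph outline cannot substitute for it; as a citation of the literature it is fine, as a proof it is not one.

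Two of your reduction steps also gloss over genuine difficulties. First, ``stripping off the union of all such translates inside $Z$ leaves a closed subvariety'' is backwards as stated (removing a closed set leaves an open set), and the fact that the union of all translates of positive-dimensional abelian subvarieties contained in $Z$ is itself Zariski closed is a nontrivial structure theorem (Kawamata/Bogomolov), not a formality; the correct Noetherian induction runs on that closed locus and on the closure of $Z(k)$, and needs to be set up with care. Second, the passage from number fields to finitely generated extensions of $\Q$ via ``specializing at a closed point'' requires an injectivity statement for the specialization map on the relevant finitely generated group and a verification that the hypothesis on translates is preserved under specialization; ``a standard Mordell--Weil-type argument'' hides exactly the part that needs proof. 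If your intent is to invoke Faltings as the paper does, say so and cite him; if your intent is to prove the theorem, each of the steps above must be supplied.
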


The following corollary was proved in Moriwaki \cite[Theorem 1.1]{moriwaki}. He stated it  for number fields,  but the proof is the same for finitely generated fields over $\Q$. 
\begin{cor}\label{brodyan}
Let   $X$ be a connected smooth projective variety over $\C$ which does not admit a dominant map to its Albanese variety. Then for
any finitely generated field extension $k$  of $\Q$ over which $X$ is defined, the set $X(k)$ is not Zariski dense in $X$.
\end{cor}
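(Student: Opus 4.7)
The plan is to apply the Mordell--Lang theorem (Theorem~\ref{thmF}) directly to the image of $X$ in its Albanese variety. If $X(k)$ is empty there is nothing to prove, so I may choose a base point $x_0\in X(k)$ and view the associated Albanese morphism $\alpha\colon X\to\Alb(X)$ (normalized by $\alpha(x_0)=0$) as a $k$-morphism of $k$-varieties. By hypothesis $\alpha$ is not dominant, so $Z:=\overline{\alpha(X)}$ is a proper closed $k$-subvariety of $\Alb(X)$.

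Applying Theorem~\ref{thmF} to $Z\subset\Alb(X)$ over $k$ produces finitely many translates $a_i+A_i$, $1\le i\le r$, of abelian subvarieties $A_i\subset\Alb(X)$, each defined over $k$ and contained in $Z$, with
\[
Z(k)\ \subset\ \bigcup_{i=1}^{r}(a_i+A_i).
\]
Since $\alpha$ sends $X(k)$ into $Z(k)$, this gives $X(k)\subset\bigcup_{i=1}^{r}\alpha^{-1}(a_i+A_i)$, so it suffices to check that each $\alpha^{-1}(a_i+A_i)$ is a proper closed subvariety of $X$.

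The one nontrivial point, which is the step to execute with care, is ruling out the possibility that $Z=a_i+A_i$ for some~$i$, since only then could $\alpha^{-1}(a_i+A_i)$ be all of $X$. Here I would invoke the universal property of the Albanese: the set $\{\alpha(x)-\alpha(x_0)\mid x\in X\}$ generates $\Alb(X)$ as an abelian group, so $\alpha(X)$ cannot lie in any translate of a proper abelian subvariety of $\Alb(X)$. In particular $Z$ itself is not such a translate, hence every $a_i+A_i\subsetneq Z$; combined with the Zariski density of $\alpha(X)$ in $Z$, this forces $\alpha^{-1}(a_i+A_i)\subsetneq X$. Therefore $X(k)$ is covered by finitely many proper closed subvarieties of $X$ and is not Zariski dense.
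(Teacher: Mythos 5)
Your proof is correct and follows essentially the same route as the paper's: define the Albanese map over $k$ via a rational base point, apply Faltings' theorem to the image, and use the universal (generation) property of the Albanese together with non-dominance to see that each translate pulls back to a proper closed subvariety. The only cosmetic difference is the order in which non-dominance and the generation property are invoked to reach the contradiction.
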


\begin{proof} The conclusion is obvious if $X(k)$ is empty so we may choose  a point of
 $X(k)$ to define the  Albanese map over $k$:
$$j: X\to \Alb(X).$$
Applying Theorem \ref{thmF} to the closed subvariety $Z= j(X)$ of  $\Alb(X)$ we get a finite number, say $m\geq 1$, of translates $Z_i$ of
abelian subvarieties of $\Alb(X)$ defined over $k$ and  such that
$$Z(k) \, \subset \, \bigcup_{i=1}^m \, Z_i(k) \, \, \text { and } \, \, Z_i \subset Z.$$

Since $j$  is defined over $k$, each   $k$-rational point of $X$  is contained in  $j^{-1}(Z_i)$ for some $i$. If  $j^{-1}(Z_i)$ were not a proper closed subvariety of  $X$,  the universal property of the Albanese map would imply that   $Z=Z_i=\Alb(X)$,
contradicting the assumption that   $j$ is not dominant.
\end{proof}

\begin{prop}\label{bombieri-lang}
For every arithmetic subgroup $\Gamma\subset G(F)$  there exists a  finite cover of $Y_{\Gamma}$ 
whose points over any finitely generated field extension of $\Q$ are not Zariski dense, {\it i.e.}, 
the Bombieri-Lang conjecture holds for that cover.  
\end{prop}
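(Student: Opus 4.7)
The plan is to produce a finite index subgroup $\Gamma' \subset \Gamma$ such that the appropriate smooth projective model of $Y_{\Gamma'}$ has irregularity strictly greater than its dimension $n$. For such a model the Albanese variety has dimension $> n$, so the Albanese map cannot be dominant, and Corollary \ref{brodyan} then delivers the Bombieri--Lang conclusion.

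First, I would apply Corollary \ref{growth-cor} recursively a bounded number of times starting from $\Gamma$: each application strictly increases the irregularity by a positive integer, so after at most $n - q(Y_\Gamma) + 1$ steps one arrives at a finite index subgroup $\Gamma'_0 \subset \Gamma$ with $q(Y_{\Gamma'_0}) > n$. Since irregularity cannot decrease when passing to a finite cover (a fact used in the proof of Corollary \ref{growth-cor}), I may intersect with a principal congruence subgroup $\Gamma(N)$ for some $N > 2$, which is neat by Lemma \ref{neat}, to obtain a neat finite index $\Gamma' \subset \Gamma$ still satisfying $q(Y_{\Gamma'}) > n$. By Lemma \ref{covers}, $Y_{\Gamma'}$ is then a complex hyperbolic manifold, a finite etale cover of the orbifold $Y_\Gamma$, and (as explained after the proof of Lemma \ref{covers}) is defined over a number field.

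Next, choose a smooth projective model $X_{\Gamma'}$ defined over a number field: take $X_{\Gamma'} = Y_{\Gamma'}$ in the cocompact case (Proposition \ref{prop-yau}), and a smooth toroidal compactification otherwise (which exists over a number field by \cite{faltings-rigid}). Formula \eqref{toroidal} gives
\[
q(X_{\Gamma'}) \;=\; q(Y_{\Gamma'}) \;>\; n \;=\; \dim X_{\Gamma'},
\]
so $\dim \Alb(X_{\Gamma'}) > \dim X_{\Gamma'}$ forces the Albanese map $j \colon X_{\Gamma'} \to \Alb(X_{\Gamma'})$ to be non-dominant. Applying Corollary \ref{brodyan} (to each connected component of $X_{\Gamma'}$, which are permuted by Galois) yields that $X_{\Gamma'}(k)$ is not Zariski dense for any finitely generated $k/\Q$ over which $X_{\Gamma'}$ is defined, and since $Y_{\Gamma'} \subset X_{\Gamma'}$ is a Zariski open subvariety the same holds for $Y_{\Gamma'}(k)$. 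The finite cover $Y_{\Gamma'} \to Y_\Gamma$ is the required one.

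The main obstacle is already absorbed into Corollary \ref{growth-cor}: one must know that the irregularity can be pushed strictly past the dimension by shrinking $\Gamma$ along congruence data, and this is exactly the content of the previous section. The only remaining delicate point is to coordinate this irregularity growth with the neatness needed for a smooth model; but because irregularity is monotone under finite covers, intersecting with a deep enough $\Gamma(N)$ resolves the issue with no loss.
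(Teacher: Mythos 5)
Your proposal is correct and follows essentially the same route as the paper: recursive application of Corollary \ref{growth-cor} to force $q(Y_{\Gamma'})>n$, followed by Corollary \ref{brodyan} applied to $Y_{\Gamma'}$ in the cocompact case and, via \eqref{toroidal}, to a smooth toroidal compactification $X_{\Gamma'}$ otherwise. The extra care you take about neatness (intersecting with $\Gamma(N)$) and about Galois-conjugate components is a harmless elaboration of the paper's shorter ``which one can assume to be torsion free.''
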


\begin{proof}
Applying  Corollary \ref{growth-cor} recursively yields a  finite index  subgroup $\Gamma'\subset \Gamma$, which one can assume to be torsion free,
such that $q(Y_{\Gamma'})>n=\dim(Y_{\Gamma'})$. It suffices then to apply Corollary \ref{brodyan}   to $Y_{\Gamma'}$ in the compact case and, in view of \eqref{toroidal}, 
to $X_{\Gamma'}$ in the non-compact case.
\end{proof}

\begin{proof}[Proof of Theorem \ref{picard-compact}]
By Proposition \ref{positivity}, we have  $q(Y_{\Gamma})>2$, hence  $Y_{\Gamma}$ does not admit a dominant map to its Albanese variety.
Moreover $Y_{\Gamma}$ is a geometrically irreducible smooth projective  surface, hence
by Corollary \ref{brodyan}  $Y_{\Gamma}(k)$ is not Zariski dense in $Y_{\Gamma}$ for any finitely generated field extension $k$ of $\Q$ over which
$Y_{\Gamma}$ is defined. If  $Y_{\Gamma}(k)$ were infinite, then $Y_{\Gamma}$ would contain  an irreducible curve $C$ defined over $k$ and such that   $C(k)$  infinite. Since $C(k)$ is Zariski dense in $C$, the  curve $C$ is geometrically irreducible and its geometric genus is at  most $1$  by Theorem \ref{thmF} applied to the Albanese map of $C$.
Taking a complex uniformization of $C$ would provide a non-constant holomorphic map from $\C$ to $Y_{\Gamma}$, which is impossible  by  Lemma \ref{covers}(i)
which we can apply as  $\Gamma$ is torsion free. Therefore $Y_{\Gamma}$ is  Mordellic.
\end{proof}

\begin{proof}[Proof of Theorem \ref{picard-alternative}]

 The Lang locus of a quasi-projective irreducible variety $Z$ over a number field is defined as the Zariski closure of the union, over all number fields $k$,
of irreducible components of positive dimension of the Zariski closure of $Z(k)$. It is clear that  $Z$ is arithmetically Mordellic if and only if its Lang locus  is empty. The main  theorem in \cite{ullmo-yafaev} asserts that, for $\Gamma$ neat and sufficiently small,  the Lang locus of  $Y_\Gamma^\ast$  is either empty or everything.

By Corollary \ref{growth-cor} one can assume by further shrinking $\Gamma$ that $q(Y_\Gamma)>n$, and 
by \eqref{toroidal} we also have $q(X_\Gamma)>n$ for $X_\Gamma$ a  smooth toroidal compactification  of $Y_\Gamma$. By Corollary \ref{brodyan}  the Lang locus of $X_\Gamma$ is not everything, which forces the Lang locus of $Y_\Gamma^\ast$ to be empty.
 \end{proof}

\begin{proof}[Proof of Theorem \ref{picard-open}]
Let us first  show that $X_\Gamma$ is of general type, hence its canonical divisor $\mathcal{K}_X$ is big
in the sense of  \cite[Definition 1.1]{nadel}. Note that just like irregularity,
  the Kodaira dimension  cannot decrease when going to a finite covering.
By Holzapfel \cite[Theorem 5.4.15]{holzapfel-book} and Feustel \cite{feustel}
 the surface $X_{\Gamma_1(\fD)}$ is of general type for all $$D\notin \{3,4,7,8,11,15,19,20,23,24,31,39,47,71\}.$$
 Also by \cite[Proposition 4.13]{holzapfel},  $X_{\Gamma(N)}$ is of general type for all integers $N>2$, with the  possible exceptions  of $N=3$ and $N=4$ when $D=4$, implying in particular that  $X_{\Gamma(\fD^2)}$ is of general type
 for $D\in  \{3,4,7,8,11,19\}$.
Finally, the argument from {\it loc. cit.}  transports in a straight forward way to the case when the level is an ideal
 of $\fO$, yielding that the remaining varieties $X_{\Gamma(\fD)}$,  $D\in  \{15,20,23,24,31,39,47,71\}$, are of general type  as well.
 See  \cite{dzambic} where this has been carried out (it should be noted that his argument works also  when  $D=24$).

If $g=\sum_{i,j=1}^2 g_{i{\bar j}} dz_i{d\bar z_j}$ denotes the Bergman metric of $\h^2_\C$ viewed as the unit ball $\{z=(z_1, z_2)\in \C^2 \, , \, |z|<1\}$, normalized by requiring that $$\mathrm{Ric}(g)=\sum_{i,j=1}^2 -\frac{\partial^2 \log( g_{1\bar 1}g_{2\bar 2}-g_{2\bar 1}g_{1\bar 2})}{\partial {z_i} \partial {\bar z_j}} dz_i{d\bar z_j}=-g,$$ then the holomorphic sectional curvature is constant and equals $-4/3$ (see \cite[\S3.3]{GKK}), where  $g_{i\bar j}=\frac{3\left((1-|z|^2)\delta_{ij}+\bar z_iz_j\right)}{(1-|z|^2)^2}$.

By \eqref{toroidal} and  Proposition \ref{low} we have that $\Gamma$ is neat and $q(X_\Gamma)=q(Y_\Gamma)>2$. Corollary \ref{brodyan} then implies  that  $X_\Gamma(k)$ is not Zariski dense in $X_\Gamma$
for any finitely generated field extension $k$ of $\Q$ over which $X_{\Gamma}$ is defined. 
If $X_\Gamma(k)$ is infinite, arguing as in the proof of Theorem \ref{picard-compact}
shows that $X_\Gamma$ contains a geometrically irreducible curve $C$ whose geometric genus is at most one.  Now applying a result of Nadel  \cite[Theorem 2.1]{nadel} with $\gamma=1$ (so that $-\gamma \geq -4/3$), we see that the bigness of $\mathcal{K}_X$ implies that   $C$ is contained in the compactifying  divisor, which is a finite union of elliptic curves indexed by the cusps.
It follows that  $Y_\Gamma^\ast(k)$ is finite and that $X_\Gamma$  does not contain any rational curves at all, let alone just those of self intersection $-1$, hence it is a minimal surface of general type.
\end{proof}

\small

\bibliographystyle{siam}

\normalsize
\end{document}